\theoremstyle{plain}
\newtheorem{thm}{Theorem}[section]
\newtheorem{cor}[thm]{Corollary}
\newtheorem*{coro}{Corollary}
\newtheorem{lemma}[thm]{Lemma}
\theoremstyle{definition}
\newtheorem{defn}[thm]{Definition}
\newtheorem{notation}[thm]{Notation}
\newtheorem{rem}[thm]{Remark}
\newcommand{\Sp}{\operatorname{Sp}}
\newcommand{\Mod}{\operatorname{Mod}}
\newcommand{\Fix}{\operatorname{Fix}}
\newcommand{\lcm}{\operatorname{lcm}}
\renewcommand{\O}{\operatorname{{\mathcal O}}}
\renewcommand{\o}{\operatorname{{\mathbb O}}}
\renewcommand{\S}{\operatorname{{\mathbb S}}}
\newcommand{\I}{\operatorname{{\mathcal I}}}
\newcommand{\p}{\operatorname{{\mathbb P}}}
\newcommand{\D}{\operatorname{{\mathcal D}}}
\newcommand{\E}{\operatorname{{\mathcal E}}}
\newcommand{\C}{\operatorname{{\mathcal{C}}}}
\newcommand{\csum}{\overline{\textbf{\Large \#}}}
\renewcommand{\l}{\operatorname{{\llbracket}}}
\renewcommand{\r}{\operatorname{{\rrbracket}}}
\newcommand\llbrace{\Big\{\!\big\{}
\newcommand\rrbrace{\big\}\!\Big\}}
\title{Roots of Dehn Twists about Multicurves}
\author{Kashyap Rajeevsarathy}
\address{Department of Mathematics\\
Indian Institute of Science Education and Research Bhopal\\
Bhopal Bypass Road, Bhauri \\
Bhopal 462 066, Madhya Pradesh\\
India}
\email{kashyap@iiserb.ac.in}
\urladdr{https://home.iiserb.ac.in/$_{\widetilde{\phantom{n}}}$kashyap/}
\author{Prahlad Vaidyanathan}
\address{Department of Mathematics\\
Indian Institute of Science Education and Research Bhopal\\
Bhopal Bypass Road, Bhauri \\
Bhopal 462 066, Madhya Pradesh\\
India}
\email{prahlad@iiserb.ac.in}
\urladdr{https://home.iiserb.ac.in/$_{\widetilde{\phantom{n}}}$prahlad/}
\subjclass[2000]{Primary 57M99; Secondary 57M60}
\keywords{surface, mapping class, Dehn twist, multicurve, root}
\begin{document}

\maketitle

%-----------------------------------
% Abstract
%-----------------------------------

\begin{abstract}
A \textit{multicurve} $\C$ on a closed orientable surface is defined
to be a finite collection of disjoint non-isotopic essential simple closed curves.
The Dehn twist $t_{\C}$ about $\C$ is the product of the Dehn twists
about the individual curves. In this paper, we give necessary and 
sufficient conditions for the existence of a root of such a Dehn twist, 
that is, a homeomorphism $h$ such that $h^n  = t_{\C}$. We give 
combinatorial data that corresponds to such roots, and use it to 
determine upper bounds for $n$. Finally, we classify all such roots
up to conjugacy for surfaces of genus 3 and 4. 
\end{abstract}

%-----------------------------------
% Introduction
%-----------------------------------

\section{Introduction}
For $g \geq 0$, let $S_g$ be the closed, orientable surface of genus $g$, and let $\text{Mod}(S_g)$ denote the mapping class group of $S_g$. By a \textit{multicurve} $\C$ in $S_g$, we mean a finite collection of disjoint non-isotopic essential simple closed curves in $S_g$. Let $t_c$ denote the left-handed Dehn twist about an essential simple closed curve $c$ on $S_g$. Since the Dehn twists about any two curves in $\C$ commute, we will define the \textit{left-handed Dehn twist about} $\C$ to be
$$
t_{\C} := \prod_{c\in \C} t_c
$$
A \textit{root of $t_{\C}$ of degree $n$} is an element $h \in \text{Mod}(S_g)$ such that $h^n = t_{\C}$. 

When $\C$ comprises a single nonseparating curve, D. Margalit and S. Schleimer \cite{MS} showed the existence of roots of $t_{\C}$ of degree $2g - 1$ in $\Mod(S_g)$, for $g \geq 2$. This motivated \cite{MK1}, in which D. McCullough and the first author derived necessary and sufficient conditions for the existence of a root of degree $n$. As immediate applications of the main theorem in the paper, they showed that roots of even degree cannot exist and that $n \leq 2g -1$. When $\C$ consists of a single separating curve,  the first author derived conditions \cite{KR1} for the existence of a root of $t_{\C}$. A stable quadratic upper bound on $n$, and complete classifications of roots for $S_2$ and $S_3$, were derived as corollaries 
to the main result. In this paper, we shall  derive conditions for the existence of a root of $t_{\C}$ when $|\C| \geq 2$, and since there are no such multicurves in $S_1$ or $S_0$, we shall assume henceforth that $g \geq 2$.

In general, a root $h$ of $t_{\C}$ may permute some curves in $\C$, while preserving other curves. So we define an \textit{$(r,k)$-permuting root} of $t_{\C}$ to be one that induces a partition of $\C$ into $r$ singletons and $k$ other subsets of size greater than one. The theory for $(r,0)$-permuting roots, as we will see, can be obtained by generalizing the theories developed in \cite{MK1} and \cite{KR1}, which involved the analysis of the fixed point data of finite cyclic actions.

The theory that we intend to develop for $(r,k)$-permuting roots when $k>0$ can be motivated by the following example. Consider the multicurve $\C$ in $S_5$ shown in Figure \ref{fig:perm_s5}. 

\begin{figure}[h!]
\labellist
\small
\pinlabel $2\pi/5$ at 30 200
%\pinlabel $h=r_{2\pi/5}t_{c_1}$ at 250 200
\endlabellist
\centering
\includegraphics[width=35 ex]{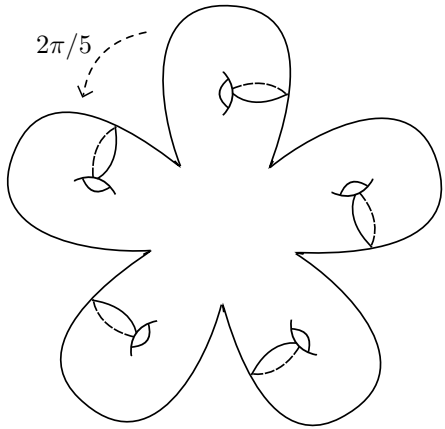}
\caption{Nonseparating multicurve of size 5 in $S_5$}\label{fig:perm_s5}
\end{figure}

\noindent It is apparent that the rotation of $S_5$ by $2\pi/5$ composed with $t_c$ for some fixed $c\in \C$ is a 5$^{th}$ root of $t_{\C}$ in $\Mod(S_5)$. This is a simple example of a $(0,1)$-permuting root, which is obtained by removing invariant disks around pairs of points in two distinct orbits of the $2\pi/5$ rotation of $S_0$, and then attaching five 1-handles with full twists. This example indicates that a classification of such roots would require the examination of the orbit information of finite cyclic actions, in addition to their fixed point data. This is a significant departure from existing theories developed in~\cite{MK1} and~\cite{KR1}. 

Any subset of a multicurve will be called a \textit{submulticurve}. A multicurve $\C$ in $S_g$ is said to be \textit{pseudo-nonseparating} if $\C$ separates $S_g$, but no proper submulticurve of $\C$ separates $S_g$. A multicurve that contains no pseudo-nonseparating submulticurves will be called a \textit{nonseparating multicurve}, while a multicurve which is a disjoint union of pseudo-nonseparating multicurves will be called a \textit{separating multicurve}. A multicurve that is neither separating nor nonseparating will be called a \textit{mixed multicurve}. In Figure~\ref{fig:s5mixed} below, the collection of curves $\C = \{c_1,c_2,c_3,c_4\}$ is a mixed multicurve, while the subcollections $\{c_2,c_3\}$, $\{c_1,c_2,c_3\}$ and $\{c_2,c_4\}$ form pseudo-nonseparating, separating and nonseparating multicurves, respectively.

\begin{figure}[h]

\labellist
\small
\pinlabel $c_1$ at 205 120
\pinlabel $c_2$ at 565 160
\pinlabel $c_3$ at 565 85
\pinlabel $c_4$ at 1053 85
\endlabellist
\centering
\includegraphics[width= 75 ex]{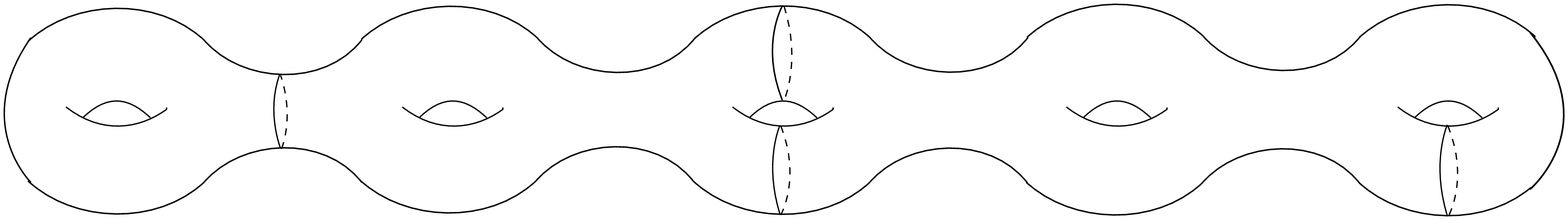}
\caption{The surface $S_5$ with a mixed multicurve}\label{fig:s5mixed}
\end{figure}

We start by generalizing the notion of a nestled $(n,\ell)$-action from~\cite{KR1} to a \textit{permuting $(n,r,k)$-action}. These are $C_n$-actions on $S_g$ that have $r$ distinguished fixed points, and $k$ distinguished non-trivial orbits. In Section~\ref{sec: actions_data_triples}, we introduce the notion of a \textit{permuting data set}, which is a generalization of a data set from \cite{KR1}. We use Thurston's orbifold  theory \cite[Chapter 13]{T1} in Theorem~\ref{thm:action_triple_correspondence} to establish a correspondence between permuting $(n,r,k)$-actions on $S_g$ and permuting data sets of genus $g$ and degree $n$. In other words, permuting data sets algebraically encode these permuting actions and contain all the relevant orbit and fixed-point information required to classify the roots that will be constructed from these actions.

Let $S_g(\C)$ denote the surface obtained from $S_g$ by deleting an annular neighbourhood of $\C$ and capping. In Section~\ref{sec:nonsepmulcurves}, we prove that conjugacy classes of roots of Dehn twists about nonseparating multicurves correspond to a special subclass of permuting actions on $S_g(\C)$. We use this to obtain the following bounds for the degree of such a root
\begin{coro}
Let $\C$ be a nonseparating multicurve in $S_g$ of size $m$, and let $h$ be an $(r,k)$-permuting root of $t_{\C}$ of degree $n$. 
\begin{enumerate}[(i)]
\item If $r\geq 0$, then
$$
n\leq \begin{cases}
4(g-m) + 2 &: g-m \geq 1 \\
g &: g = m.
\end{cases}
$$
Furthermore, if $g=m$, then this upper bound is realizable.
\item If $r\geq 1$, then $n$ is odd.
\item If $r=1$, then $n\leq 2(g-m)+1$. %Furthermore, if $g_{\C}\geq 2$, then $n \leq 2g_{\C}$
\item If $r\geq 2$, then $\displaystyle n \leq \frac{g-m+r-1}{r-1}$.
\end{enumerate}
\end{coro}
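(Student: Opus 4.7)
The plan is to invoke the correspondence established in Section~\ref{sec:nonsepmulcurves}: an $(r,k)$-permuting root $h$ of $t_\C$ corresponds to a permuting $(n,r,k)$-action on the cut-and-capped surface $S_g(\C)$, which is connected of genus $g' := g-m$ because $\C$ is nonseparating. Each singleton in the partition of $\C$ yields a preserved curve whose two capped disks produce marked points permuted by the induced $C_n$-action on $S_{g'}$; when these two marked points are fixed by the action, they contribute $2(n-1)$ to the Riemann--Hurwitz sum. I would derive each bound by combining the Riemann--Hurwitz formula for the quotient orbifold $S_{g'}/C_n$ with the local rotation-number constraints enforcing that $h^n$ restricts to a left Dehn twist near every preserved curve.

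For (i) with $g' \geq 1$, I would apply Wiman's classical bound $n \leq 4g'+2$, checking the torus case $g'=1$ directly against the list $\{1,2,3,4,6\}$ of possible orders. When $g = m$, the surface $S_g(\C) = S^2$ carries only two fixed points of the $C_n$-action, forcing the $2g$ cap points into non-trivial orbits whose total size is $2g$; orientation compatibility between $h^n$ and the left twist about each curve rules out orbit patterns with $n > g$, yielding $n \leq g$, which is realized by the obvious generalization of Figure~\ref{fig:perm_s5}. For (ii), the computation of~\cite{MK1} at the two cap points arising from a preserved curve produces simultaneous rotation-number congruences modulo $n$ which are soluble only when $n$ is odd.

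For (iv), since $r \geq 2 \geq 1$, part (ii) already forces $n$ to be odd, so $C_n$ contains no element of order two; hence every preserved curve must have both of its sides fixed, producing two genuine fixed points of the action on $S_{g'}$. The quotient orbifold therefore contains at least $2r$ cone points of order $n$, and Riemann--Hurwitz gives
$$2g' - 2 \;\geq\; n(2g_0 - 2) + 2r(n-1) \;\geq\; -2n + 2r(n-1).$$
Rearrangement produces $n(r-1) \leq g' + r - 1$, which is the claimed bound.

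The main obstacle is (iii): with $r = 1$ the Riemann--Hurwitz inequality above reduces to the vacuous $2g' - 2 \geq -2$, so the bound $n \leq 2(g-m)+1$ requires a finer orbifold analysis. I would argue, using the rotation-number constraints from (ii) together with the permuting-data-set machinery of Section~\ref{sec: actions_data_triples}, that the quotient orbifold must contain at least one additional cone point and that divisibility constraints on the admissible rotation data force its order to be exactly $n$. The resulting extra contribution of $n-1$ refines Riemann--Hurwitz to $2g' - 2 \geq n-3$, yielding $n \leq 2g'+1$. Excluding lower-order extra cone points---which bare Riemann--Hurwitz would permit, allowing much larger $n$ such as $n \approx 3g'$ from an order-$3$ extra cone---is the delicate step, and it relies on the combinatorial data developed earlier in the paper rather than on Euler characteristic alone.
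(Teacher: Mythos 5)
Your route is essentially the paper's: pass to the permuting $(n,2r,2k)$-action on $S_g(\C)$ of genus $g'=g-m$ via Theorem~\ref{thm:actions-nonseproots}, quote Wiman's bound for (i) when $g'\geq 1$, analyse the sphere rotation when $g=m$, get oddness in (ii) from the congruence $a+a'\equiv aa' \pmod n$ at a compatible pair of fixed points, and run Riemann--Hurwitz with the $2r$ order-$n$ cone points for (iv) --- the paper does exactly this, citing \cite{MK1} (Corollary 2.2 there) for (ii) and (iii). The one step that, as written, would fail is the completion you propose for (iii): it is \emph{not} true that the quotient orbifold must contain an additional cone point of order exactly $n$. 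The congruence only forces the total contribution $\sum_j (n/n_j)c_j$ of the non-distinguished cone points to be a unit mod $n$, which pins the order to $n$ only when there is a \emph{single} such cone point. For instance the nonseparating permuting data set $(15,0,(14,8);(1,3),(1,5))$ satisfies every condition ($14+8\equiv 14\cdot 8\equiv 7$, total sum $\equiv 0 \pmod{15}$) and has genus $g'=11$, with no order-$15$ cone point. The bound still holds, but by the case split that \cite{MK1} actually carries out: if $g_0\geq 1$ then Riemann--Hurwitz already gives $n\leq g'$; if $g_0=0$ and there is exactly one extra cone point, the unit condition forces its order to be $n$ and $2g'=n-1$; if there are at least two extra cone points, each has order at least $3$ (since $n$ is odd by (ii)), so each contributes at least $2n/3$ to $2g'$ and $n\leq 3g'/2$. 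In all cases $n\leq 2g'+1$.

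Two smaller points. In (i) with $g=m$, your ``orientation compatibility'' clause should be replaced by the paper's precise argument: the rotation's two fixed points have angles summing to $0\neq 2\pi/n$, so $r=0$, and every distinguished orbit has size exactly $n$; since the $k$ orbits on one side of the annuli partition the $m$ curves, $m=kn$, so in fact $n\mid m$ (your count of the $2m$ cap points alone only yields $n\mid 2m$, which permits $n=2g$). In (iv), the detour through oddness to secure the $2r$ fixed points is unnecessary and mildly circular --- your proof of (ii) already presupposes those fixed points --- since Theorem~\ref{thm:actions-nonseproots} supplies them directly; with that, your Riemann--Hurwitz computation for (iv) coincides with the paper's.
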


If $\C$ is a separating or a mixed multicurve, $S_g(\C)$ will have multiple connected components. In order to classify roots of $t_{\C}$ in this case, we will require multiple actions on the various components of $S_g(\C)$ which can be put together and extended to a root of $t_{\C}$ on $S_g$. In Section~\ref{sec:sepmulcurves}, we will show that this extension will require compatibility of orbits across components of $S_g(\C)$ in addition to compatibility of fixed points as in \cite{KR1}. As an immediate consequence to this theorem, we obtain quadratic bounds for the degree of the root in terms of the genera of the individual components. Furthermore, we obtain a quadratic stable upper bound for the degree of the root as in \cite[Theorem 8.14]{KR1}.

In Sections \ref{sec:classify_genus3} and \ref{sec:classify_genus4}, we use our theory to obtain a complete classification of roots of $t_{\C}$ on surfaces of genus 3 and 4 respectively. We conclude by proving that a root of $t_{\C}$ cannot lie in the Torelli group of $S_g$, and also indicate how our results can be extended to classify roots of finite products of powers of commuting Dehn twists.
integrated 
%-----------------------------------
% Permuting actions and Data sets
%-----------------------------------

\section{Roots and their induced partitions}
In this section we shall introduce some preliminary notions, which will be used in later sections.

\begin{notation}\label{defn:sgc}
Let $\C$ be a multicurve in $S_g$, and let $N$ be a closed annular neighbourhood of $\C$. 
\begin{enumerate}[(i)]
\item We denote the surface $\overline{S_g\setminus N}$ by $\widehat{S_g(\C)}$. 
\item The closed orientable surface obtained from $\widehat{S_g(\C)}$ by capping off its boundary components is denoted by $S_g(\C)$. 
\item If $\C$ is a nonseparating multicurve, then $S_g(\C)$ is a connected surface whose genus we denote by $g_{\C}$.
\end{enumerate}
%\noindent Note that if $\C$ contains a separating multicurve, then $S_g(\C)$ is disconnected.
\end{notation}

\begin{notation}\label{defn:pseudo_multicurve}
Recall that a multicurve $\C$ in $S_g$ is said to be \emph{pseudo-nonseparating} if $S_g(\C)$ is disconnected, but $S_g(\C')$ is not disconnected for any proper submulticurve $\C'\subset \C$.
\begin{enumerate}[(i)]
\item  If $|\C| = k$, we write $\C^{(k)}$ for such a multicurve. Note that $\C^{(1)}$ is a single separating curve. 
\item A disjoint union of $m$ copies of $\C^{(k)}$ is denoted by $\C^{(k)}(m)$.
\item For integers $g \geq 0$ and $m \geq 1$, we define $\S_g(m)$ to be the disjoint union of $m$ copies $\{S_g^1, S_g^2, \ldots, S_g^m\}$ of $S_g$ isometrically imbedded in $\mathbb{R}^3$. In particular, $\S_g(1) \cong S_g$, and hence we shall write $S_g$ for $\S_g(1)$. 
\item Given two surfaces $S_{g_1}$ and $\S_{g_2}(m)$ and a fixed $k\in \mathbb{N}$, we construct a new surface $S_g$ with $g=(g_1+mg_2 + (k-1)m)$, containing a multicurve of type $\C^{(k)}(m)$, in the following manner. We remove $km$ disks $\{D_{i,j}^1 : 1\leq j\leq k, 1\leq i\leq m\}$ on $S_{g_1}$ and $k$ disks $\{D_{i,j}^2 : 1\leq j\leq k\}$ on each $S_{g_2}^i$. Now connect $\partial D_{i,j}^1$ to $\partial D_{i,j}^2$ along a 1-handle $A_{i,j}$, and choose the unique curve (upto isotopy) $c_{i,j}$ on $A_{i,j}$. Let $\C = \{c_{i,j}\}$, then note that $\C = \C^{(k)}(m)$, so we write
$$
S_{g_1}\#_{\C} \S_{g_2}(m)
$$
for the new surface $S_g$.
\item Similarly, given surfaces $\{S_{g_1}, \S_{g_2,1}(m_1), \ldots, \S_{g_2,s}(m_s)\}$ and non-negative integers $\{k_1,k_2,\ldots, k_s\}$, we construct a new surface $S_g$ with $g= g_1 + \sum_{i=1}^s m_i(g_{2,i} + k_i-1)$, containing a multicurve of type $\C = \sqcup_{i=1}^s \C^{(k_i)}(m_i)$. Let
$$
S_{g_i'} := S_{g_1}\#_{\C^{(k_i)}(m_i)} \S_{g_2,i}(m_i)
$$
and $\C_i := \C\setminus \C^{(k_i)}(m_i)$, we now define
$$
 \csum_{i=1}^s \left( S_{g_1}{\#_{\C^{(k_i)}(m_i)}} \S_{g_2,i}(m_i) \right) := \bigcup_{i=1}^s \widehat{S_{g_i'}(\C_i)}.
$$ 
If $s = 2$, we simply write $S_g = \S_{g_{2,1}}(m_1)\#_{\C^{(k_1)}(m_1)} S_{g_1}\#_{\C^{(k_2)}(m_2)} \S_{g_{2,2}}(m_2)$.
%We will see later that such constructions will play a crucial role roots of $t_{\C}$ where $\C = \sqcup_{i=1}^s \C^{(k_i)}(m_i)$ on $S_g$.
\end{enumerate}
\end{notation}
\noindent In Figure~\ref{fig:sg_sgm} below, we give an example of a such a surface $S_{22}$ with a multicurve $\C = \C^{(2)}(2) \sqcup \C^{(1)}(3)$ constructed as in Notation~\ref{defn:pseudo_multicurve} from the surfaces $S_5, \S_3(2)$, and $\S_3(3)$. 
\begin{figure}[h]
\labellist
\tiny
\pinlabel $\C^{(2)}(2)$ at 70 95
\pinlabel $\C^{(1)}(3)$ at 620 35
%\pinlabel $c_2'$ at 565 160
%\pinlabel $c_2''$ at 565 85
%\pinlabel $c_3$ at 1053 85
\endlabellist
\centering
\includegraphics[width= 70 ex]{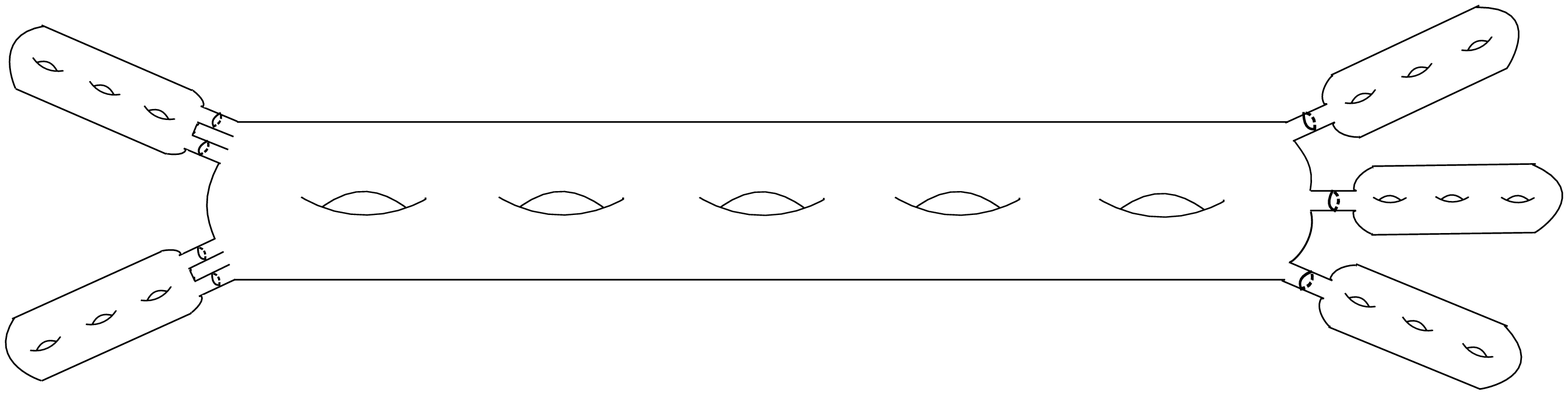}
\caption{The surface $S_{22} = \S_3(2) \#_{\C_1} S_5 \#_{\C_2} \S_3(3),$ where $\C_1 = \C^{(2)}(2)$ and $\C_2 = \C^{(1)}(3)$.}
\label{fig:sg_sgm}
\end{figure}

\begin{rem}\label{rem:root_isotopy}
Let $\C = \{c_1,c_2,\ldots, c_m\}$, and suppose that $h$ is a root of $t_{\C}$ of degree $n$ in $\Mod(S_g)$. 
Then, we claim that, up to isotopy, $h(\C) = \C$.

Suppose first that $h(c_i) \nsim c_1$ for all $i$. Then there exists a neighbourhood $N$ of $c_1$ such that
$$
t_{h(c_i)} \mid_N = \text{id}_N \text{ and } t_{c_i}\mid_N = \text{id}_N \quad\forall i\neq 1
$$
However, since $h^n = t_{\C}$, it follows that
$$
t_{\C} = ht_{\C}h^{-1} = ht_{c_1}h^{-1}ht_{c_2}h^{-1}\ldots ht_{c_m}h^{-1} = t_{h(c_1)}t_{h(c_2)}\ldots t_{h(c_m)}
$$
Hence
$$
t_{c_1}\mid_N = t_{\C}\mid_N = t_{h(c_1)}t_{h(c_2)}\ldots t_{h(c_m)}\mid_N = \text{id}_N
$$
which is a contradiction.

So we assume without loss of generality that $h(c_1) \sim c_1$. Then we choose a 
neighbourhood $N'$ of $c_1$ disjoint from $c_i$ for $i\neq 1$, and choose an isotopy $\varphi_t$ such that
$$
\varphi_t\mid_{S_g\setminus N'} = \text{id}_{S_g\setminus N'}
$$
and $\varphi_0 = h$ and $\varphi_1(c_1) = c_1$. Replacing $h$ by $\varphi_1$, we may assume that $h(c_1) = c_1$. Now note that $h(c_i) \nsim c_1$ for all $i\neq 1$, which allows us to proceed 
by induction on $|\C|$ to conclude that, up to isotopy, $h(\C) = \C$.
\end{rem}

\begin{defn}\label{defn:rkpartition}
Let $\C$ be a multicurve of size $m$ in $S_g$. Then for integers $r,k \geq 0$, an \textit{$(r,k)$-partition} 
of $\C$ is a partition $\p_{r,k}(\C) = \{\C_1',\ldots,\C_r',\C_1,\ldots,\C_k\}$ of the set $\C$ into subsets such that for all $i$, 
\begin{enumerate}[(i)]
\item $|\C_i'| =1$, $|\C_i| > 1$, and 
\item $\C_i$ comprises only separating or only nonseparating curves.
\end{enumerate}
\end{defn}

\begin{defn}
Let $\C$ be a multicurve in $S_g$. Then for integers $r,k \geq 0$, a root $h$ of $t_{\C}$ of is said to be \textit{$(r,k)$-permuting} if it induces an $(r,k)$-partition of $\C$. 
\end{defn}

\begin{notation}\label{notation:rkpartition}
Let $\C$ be a multicurve of size $m$ in $S_g$ and consider an $(r,k)$-partition $\p_{r,k}(\C) = \{\C_1',\ldots,\C_r',\C_1,\ldots, \C_k\}$ as in Definition~\ref{defn:rkpartition}. 
\begin{enumerate}[(i)]
\item We shall denote the multiset $\llbrace |\C_1|,|\C_2|, \ldots,|\C_k|\rrbrace$ by $S(\p_{r,k})$. (From here on, we shall denote a multiset using $\llbrace\,\,\rrbrace$).
\item If $\p_{r,k}(\C)$ is induced by an $(r,k)$-permuting root $h$ of $t_{\C}$, then we shall denote it by $\C_{r,k}(h)$.
\end{enumerate}
\end{notation}

\section{Permuting Actions and Permuting Data Sets}\label{sec: actions_data_triples}
In this section we shall introduce permuting $(n,r,k)$-actions, which are generalizations of the nestled $(n,\ell)$-actions from~\cite{KR1}. We shall also introduce the notion of a permuting $(n,r,k)$-data set, which is an abstract tuple involving non-negative integers that would algebraically encode a permuting $(n,r,k)$-action. 
\begin{defn}\label{defn:permuting_actions}
For integers $n\geq 1 \text{ and }r,k\geq 0$, an orientation-preserving $C_n$-action $t$ on $S_g$ is called a \emph{permuting $(n,r,k)$-action} if
\begin{enumerate}[(i)]
\item there is a set $\p(t) \subset S_g$ of $r$ distinguished fixed points of $t$, and 
\item there is a set $\o(t) \subset S_g$ of $k$ distinguished non-trivial orbits of $t$.
\end{enumerate}
%Note that if $t$ is a free action, then $r=0$ and $|\o| = n$ for all $\o \in \o_t$.
\end{defn}
\begin{notation}\label{defn:orbit_distribution}
Let $t$ be a permuting $(n,r,k)$-action on $S_g$. 
\begin{enumerate}[(i)]
\item Fix a point $P \in S_g$, and consider $t_{\ast}: T_P(S_g) \to T_{t(P)}(S_g)$.  By the Nielsen realisation theorem~\cite{K1}, we may change $t$ by isotopy in $\Mod(S_g)$ so that $t_{\ast}$ is an isometry. Hence, $t_{\ast}$ induces a local rotation by an angle, which we shall denote by $\theta_P(t)$. Note that if $P \in \p(t)$, then $\theta_P(t) = 2\pi a/n,$ where $\gcd(a,n) = 1$. 

%\item Fix a point $P \in \p(t)$, and consider $t_{\ast}: T_P(S_g) \to T_{P}(S_g)$.  By the Nielsen realisation theorem~\cite{K1}, we may change $t$ by isotopy in $\Mod(S_g)$ so that $t_{\ast}$ is an isometry. Hence, $t_{\ast}$ induces a local rotation by an angle, which we shall denote by $\theta_P(t)$. Note that $\theta_P(t) = 2\pi a/n,$ where $\gcd(a,n) = 1$. 

\item Fix an orbit $\o=\{P_1,\ldots,P_s\} \in \o(t)$. If $s<n$, then $s\mid n$, and there exists a cone point in the quotient orbifold of degree $n/s$. Each $P_i$ has stabilizer generated by $t^s$ and the rotation induced by $t^s$ around each $P_i$ must be the same, since its action at one point is conjugate by a power of $t$ to its action at each other point in the orbit. So the rotation angle is of the form $2\pi c^{-1}/(n/s)\pmod{2\pi}$, where $(c,n/s) = 1$ and $c^{-1}$ denotes the inverse of $c\pmod{n/s}$. We now associate to this orbit a pair $p(\o)$ as follows: 
$$
p(\o) := \begin{cases}
(c, n/s) &\text{if } s<n \\
(0,1) &\text{if } s=n.
\end{cases}
$$

\item For any orbit $\o \in \o(t)$, if $p(\o) = (a,b)$, then we define
$$
\theta_{\o}(t) := 
\begin{cases}
2\pi a^{-1}/b &\text{if } a\neq 0 \\
0 &\text{if }a=0.
\end{cases}
$$
\end{enumerate}
\end{notation}

\begin{defn}\label{defn: orbit_distribution}
 Consider a permuting $(n,r,k)$-action $t$ on $S_g$ with $\p(t) = \{P_1,\ldots,P_r\}$ and $\o(t) = \{\o_1,\o_2,\ldots, \o_k\}$.
\begin{enumerate}[(i)]
\item We write
$$
S(t) = \llbrace |\o_1|, |\o_2|, \ldots, |\o_k|\rrbrace.
$$
\item  For each $p \in \{p(\o_i) : 1\leq i \leq k\}$, define
$$
m_p = |\{j : p(\o_j) = x\}|.
$$
We define the \emph{orbit distribution} of $t$ to be the set
$$
\o_t = \{(p,m_p) :  \in \{p(\o_i): 1\leq i\leq k\}\}.
$$
\end{enumerate}
\end{defn}

\begin{defn}
\label{defn:eq_perm_actions}
Let $t_1$ and $t_2$ be two permuting $(n,r,k)$-actions on $S_g$ with $\p(t_s) = \{P_{s,1},P_{s,2},\ldots,P_{s,r}\}$ and $\o(t_s) = \{\o_{s,1},\o_{s,2},\ldots, \o_{s,k}\}$ for $s=1,2$. We say $t_1$ is \emph{equivalent} to $t_2$ if $\o_{t_1} = \o_{t_2}$ and there is an orientation-preserving homeomorphism $\phi \in \Mod(S_g)$ such that 
\begin{enumerate}[(i)]
\item $\phi(P_{1,i}) = P_{2,i}$ for $1 \leq i \leq r$,
\item for each $1\leq j\leq k$, if $\o_{s,j} = \{Q_{j,1}^s, Q_{j,2}^s, \ldots, Q_{j,m_{s_j}}^s\}$, then $m_{1_j} = m_{2_j}$ and $\phi(Q_{j,i}^1) = Q_{j,i}^2$ for all $1\leq i\leq m_{1_j}$, and
\item $\phi t_1 \phi^{-1}$ is isotopic to $t_2$ relative to $\p(t_2)\sqcup(\cup_{j=1}^k \o_{2,j})$.
\end{enumerate}
The equivalence class of a permuting $(n,r,k)$-action is denoted by $\l t\r$.
\end{defn}

We now introduce the notion of an \textit{$(n,r)$-data set}, which encodes the signature of the quotient orbifold of a permuting $(n,r,k)$-action and the turning angles around its distinguished fixed points. Furthermore, the $(n,r)$-data set will be combined with the orbit distribution of the action to form a pair, which we will call a \textit{permuting $(n,r,k)$-data set}.
\begin{defn}\label{defn:data_sets}
Given $n\geq 1$ and $r\geq 0$, an \textit{$(n,r)$-data set} is a tuple
$$
\D = (n,g_0, (a_1,a_2,\ldots, a_r); (c_1,n_1), (c_2,n_2),\ldots, (c_s,n_s))
$$
where $n\geq 1$ and $ g_0 \geq 0$ are integers,  each $a_i$ is a residue class modulo $n$, and each $c_i$ is a residue class modulo $n_i$ such that:
\begin{enumerate}[(i)]
\item each $n_i\mid n$,
\item $gcd(a_i,n) = gcd(c_i,n_i) = 1$, and
\item $\displaystyle \sum_{i=1}^r a_i + \sum_{j=1}^s \frac{n}{n_i}c_i \equiv 0\pmod{n}$.
\end{enumerate}
The number $g$ determined by the equation
\begin{equation}\label{eqn:riemann_hurwitz}
\frac{2-2g}{n} = 2-2g_0 + r\left(\frac{1}{n}-1 \right) + \sum_{j=1}^s \left(\frac{1}{n_j} - 1 \right)
\end{equation}
is called the \emph{genus} of the data set.
\end{defn}

\begin{defn} Fix an $(n,r)$-data set $\D$ of genus $g$ as above.
\begin{enumerate}[(i)]
\item For each $(a,b) \in \{(0,1),(c_1,n_1), \ldots, (c_s,n_s)\}$, 
we write
$$
\theta((a,b)) := \begin{cases}
0 & \text{if }a=0, \\
2\pi a^{-1}/b &\text{if } a\neq 0.
\end{cases}
$$
\item For each $p \in \{(0,1),(c_1,n_1), \ldots, (c_s,n_s)\}$, choose a non-negative integer $m_p$. Then the set $\o_{\D} = \{(p,m_p) : m_p > 0\}$
is called an \emph{orbit distribution} of $\D$.
\item Given an orbit distribution $\o_{\D}$ associated with an $(n,r)$-data set $\D$, the pair $(\D, \o_{\D})$ is called a \emph{permuting $(n,r,k)$-data set} of genus $g$, where $k=\sum_p m_p$.
\end{enumerate}
\end{defn}

\begin{defn} Let 
\begin{equation*}
\begin{split}
\D &= (n,g_0, (a_1,a_2,\ldots, a_r); (c_1,n_1), (c_2,n_2),\ldots, (c_s,n_s)) \\
\text{ and } \D' &= (n,g_0', (a_1',a_2',\ldots, a_r'); (c_1',n_1'), (c_2',n_2'),\ldots, (c_s',n_s'))
\end{split}
\end{equation*}
be two $(n,r)$-data sets as in Definition \ref{defn:data_sets}.
\begin{enumerate}[(i)]
\item $\D$ and $\D'$ are said to be \emph{equivalent} if 
$$
\llbrace a_1,a_2,\ldots, a_r\rrbrace = \llbrace a_1', a_2', \ldots, a_r'\rrbrace, \text{ and }
$$
$$
\llbrace (c_1,n_1),\ldots, (c_s,n_s)\rrbrace = \llbrace (c_1',n_1'), \ldots (c_s',n_s')\rrbrace.
$$
\item Two permuting $(n,r,k)$-data sets $(\D, \o_{\D})$ and $(\D', \o_{\D'})$ are said to be \emph{equivalent} if $\D$ and $\D'$ are equivalent as above, and $\o_{\D} = \o_{\D'}$.
\end{enumerate}
\end{defn}
\noindent Note that equivalent data sets have the same genus. 

\begin{thm}\label{thm:action_triple_correspondence}
Given $n\geq 1$ and $g\geq 0$, equivalence classes of permuting $(n,r,k)$-data sets of genus $g$ correspond to equivalence classes of permuting $(n,r,k)$-actions on $S_g$.
\end{thm}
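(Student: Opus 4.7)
The plan is to set up the correspondence by passing through the quotient orbifold $\O_t := S_g/\langle t\rangle$ of a permuting $(n,r,k)$-action $t$, and to use Thurston's orbifold theory \cite[Chapter 13]{T1} to recover $t$ from algebraic data. The two directions of the bijection are: (i) extracting a data set from the quotient orbifold together with the marked fixed points and orbits of $t$, and (ii) reconstructing an action from a surface-kernel epimorphism associated to a data set. Given $\l t\r$, the underlying surface of $\O_t$ has some genus $g_0$ and the projection $S_g \to \O_t$ is branched over cone points in bijection with the non-free orbits of $t$. At each of the $r$ distinguished fixed points in $\p(t)$, the rotation angle $\theta_{P_i}(t) = 2\pi a_i/n$ records a residue $a_i$ modulo $n$; at each remaining cone point the stabilizer has some order $n_j \mid n$, and the local rotation produces a pair $(c_j,n_j)$ as in Notation~\ref{defn:orbit_distribution}(ii). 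Riemann--Hurwitz applied to the orbifold cover yields the genus equation~(\ref{eqn:riemann_hurwitz}), while the defining relation in $\pi_1^{\mathrm{orb}}(\O_t)$ forces the congruence in Definition~\ref{defn:data_sets}(iii). Grouping the orbits in $\o(t)$ by their pairs $p(\o)$ recovers the orbit distribution $\o_{\D_t}$, and the result is a permuting $(n,r,k)$-data set of genus $g$.

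For the converse, given a permuting $(n,r,k)$-data set $(\D, \o_{\D})$, I would first build a closed $2$-orbifold $\O$ whose underlying surface has genus $g_0$ and whose cone points consist of $r$ points of order $n$ together with points of orders $n_1, \ldots, n_s$. The residues $a_i$ and the pairs $(c_j, n_j)$ determine candidate images in $C_n$ for the distinguished generators of $\pi_1^{\mathrm{orb}}(\O)$, and condition (iii) of Definition~\ref{defn:data_sets} is exactly what is needed for these assignments to extend, after a suitable choice of images for the handle generators, to a surjective homomorphism $\pi_1^{\mathrm{orb}}(\O) \twoheadrightarrow C_n$ whose kernel is torsion-free. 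By Thurston's theorem the kernel is the fundamental group of a closed orientable surface, and Riemann--Hurwitz forces its genus to be $g$. The associated regular cover carries the desired $C_n$-action $t$; marking as $\p(t)$ the $r$ preimages of the order-$n$ cone points, and for each pair $(p,m_p) \in \o_{\D}$ selecting $m_p$ orbits lying over the cone points with local data $p$ to form $\o(t)$, produces a permuting $(n,r,k)$-action representing an equivalence class of genus $g$.

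It remains to check that both operations descend to equivalence classes and are mutual inverses. If $t_1$ and $t_2$ are equivalent actions, then the conjugating homeomorphism $\phi$ of Definition~\ref{defn:eq_perm_actions} descends to an orientation-preserving equivalence of the quotient orbifolds preserving labels at both the distinguished fixed points and the distinguished orbits, whence the associated data sets are equivalent and the orbit distributions agree. Conversely, if two data sets are equivalent, a relabelling of cone points on the quotient side lifts, using the Nielsen realization theorem~\cite{K1}, to an orientation-preserving homeomorphism of $S_g$ that conjugates the two reconstructed actions and matches the marked points and orbits. The main obstacle, I expect, will be the bookkeeping for the orbit distribution: one must verify that the pair $p(\o)$ attached to an orbit depends only on the cone-point data of the quotient, that two orbits lying over the same cone point are interchanged by an element of the centraliser of $t$ in $\Mod(S_g)$, and that the multiplicities $m_p$ for each admissible pair $p$ can be prescribed independently even when several orbits share the same local data. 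This orbit-level rigidity is the new ingredient beyond the purely fixed-point correspondences used in \cite{MK1} and \cite{KR1}.
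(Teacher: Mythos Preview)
Your approach is essentially the same as the paper's: both directions pass through the quotient orbifold, the presentation of $\pi_1^{\mathrm{orb}}(\O)$, the surface-kernel epimorphism $\rho\colon \pi_1^{\mathrm{orb}}(\O)\to C_n$, and Riemann--Hurwitz, with the orbit distribution recovered by selecting orbits over the appropriate cone points.

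The one genuine correction concerns the tool you invoke for well-definedness in the converse direction. The Nielsen realization theorem~\cite{K1} realizes a finite subgroup of $\Mod(S_g)$ by isometries of some hyperbolic structure; it does not supply a conjugating homeomorphism between two given periodic maps. What is needed here is Nielsen's classical result on periodic transformations~\cite{N1} (or Edmonds~\cite[Theorem 1.3]{AE}): two orientation-preserving periodic homeomorphisms of $S_g$ with the same fixed-point data are conjugate by an orientation-preserving homeomorphism. The paper uses exactly this, and then handles the orbit-level bookkeeping you flagged by observing that the resulting $\phi$ descends to the quotient orbifolds and induces $\varphi_\#\colon \pi_1^{\mathrm{orb}}(\O)\to \pi_1^{\mathrm{orb}}(\O')$ with $\rho'\circ\varphi_\# = \rho$; this forces a loop around a cone point in $\O$ to map to a loop around a cone point in $\O'$ carrying the same pair in $\D$, so $\phi$ automatically carries $\p(t)$ to $\p(t')$ and $\o(t)$ to $\o(t')$ with matching local data. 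That is the concrete mechanism replacing your anticipated ``centraliser'' argument.
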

\begin{proof} 
Let $t$ be a permuting $(n,r,k)$-action on $S_g$ with quotient orbifold $\O$ whose underlying surface has genus $g_0$. If $t$ is a free action, then $\O = S_{g_0}$, and we simply write $\D = (n,g_0;)$ and $\o_{\D} = \{(0,1),k)\}$. If $t$ is not free, let $p_j$ be the image in $\O$ of the $P_j$, for $1 \leq j\leq r$, and let $q_1,q_2,\ldots, q_s$ be the other possible cone points of $\O$ as in Figure \ref{fig:orb}.

\begin{figure}[h]
\label{fig:orb}
\labellist
\small
\pinlabel $p_1$ at 810 50
\pinlabel $p_2$ at 785 91
\pinlabel $q_1$ at 686 103
\pinlabel $q_2$ at 645 75
\endlabellist
\centering
\includegraphics[width = 75 ex]{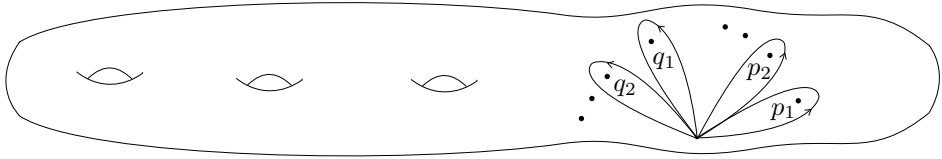}
\caption{The quotient orbifold $\O$.}
\end{figure}

Let $\alpha_i$ be the generator of the orbifold fundamental group $\pi_1^{orb}(\O)$ that goes around the point $p_i, 1\leq i\leq r$ and let $\gamma_j$ be the generators going around $q_j,1\leq j\leq s$. Let $x_p$ and $y_p,1\leq p\leq g_0$ be the standard generators of the ``surface part'' of $\O$, chosen to give the following presentation of $\pi_1^{orb}(\O)$:
\begin{gather*}
\pi_1^{orb}(\O) = \langle \alpha_1,\alpha_2,\ldots, \alpha_r, \gamma_1,\gamma_2,\ldots, \gamma_s,x_1,y_1,x_2,y_2,\ldots x_{g_0},y_{g_0} \lvert \\ \alpha_1^n = \alpha_2^n = \ldots = \gamma_1^{n_1} = \gamma_2^{n_2} = \ldots = \gamma_s^{n_s} = 1, \alpha_1\ldots \alpha_r\gamma_1\ldots \gamma_s = \prod_{p=1}^{g_0}[x_p,y_p]\rangle
\end{gather*}
From orbifold covering space theory \cite{T1}, we have the following exact sequence
$$
1 \rightarrow \pi_1(S_g) \rightarrow \pi_1^{orb}(\O) \xrightarrow{\rho} C_n\rightarrow 1,
$$
where $C_n = \langle t \rangle$. 
The homomorphism $\rho$ is obtained by lifting path representatives of elements of $\pi_1^{orb}(\O)$. Since these do not pass through the cone points, the lifts are uniquely determined. 

For $1\leq i\leq s$, the preimage of $q_i$ consists of $n/n_i$ points cyclically permuted by $t$. As in Notation~\ref{defn:orbit_distribution}, the rotation angle at each point is of the form $2\pi c_i^{-1}/n_i$ where $c_i$ is a residue class modulo $n_i$ and $gcd(c_i,n_i) = 1$. Lifting the $\gamma_i$, we have that $\rho(\gamma_i) = h^{(n/n_i)c_i}$. Similarly, lifting the $\alpha_i$ gives $\rho(\alpha_i)=t^{a_i}$ where $gcd(a_i,n)=1$. Finally, we have
$$
\rho(\prod_{p=1}^{g_0}[x_p,y_p]) = 1,
$$
since $C_n$ is abelian, so
$$
1 = \rho(\alpha_1\ldots \alpha_r\gamma_1\ldots \gamma_s) = t^{a_1+\ldots + a_r+(n/n_1)c_1 + \ldots + (n/n_s)c_s}
$$
giving
$$
\sum_{i=1}^r a_i + \sum_{j=1}^s \frac{n}{n_j}c_j \equiv 0 \pmod{n}.
$$
The fact that the data set $\D$ has genus $g$ follows easily from the multiplicativity of the orbifold Euler characteristic for the orbifold covering $S_g \to \O$:
$$
\frac{2-2g}{n} = 2-2g_0 + r\left(\frac{1}{n}-1 \right) + \sum_{j=1}^s \left(\frac{1}{n_j} - 1 \right).
$$
Thus, $h$ gives a $(n,r)$-data set
$$
\D = (n,g_0, (a_1,a_2,\ldots, a_r);(c_1,n_1),(c_2,n_2),\ldots, (c_s,n_s))
$$
of genus $g$, and hence $(\D, \o_t)$ forms a permuting $(n,r,k)$-data set. 

Consider another permuting $(n,r,k)$-action $t'$ in the equivalence class of
$t$ with a distinguished fixed point set $\p(t') = \{P_1',P_2',\ldots,P_r'\}$. Then by definition there exists an orientation-preserving homeomorphism $\phi \in \Mod(S_g)$
such that $\phi(P_j) = P_j'$ for all $j$ and $\phi t\phi^{-1}$ is isotopic to $t'$ relative to $\p(t')$. Therefore, $\theta_{P_j}(t) = \theta_{P_j'}(t')$, for $1\leq j\leq r$, and since $\o_t = \o_{t'}$, the two actions will produce the same permuting $(n,r,k)$-data sets.

Conversely, given a permuting $(n,r,k)$-data set $(\D, \o_{\D})$, we construct the orbifold $\O$ and a representation $\rho : \pi_1^{orb}(\O) \to C_n$. Any finite subgroup of $\pi_1^{orb}(\O)$ is conjugate to one of the cyclic subgroups generated by $\alpha_j$ or a $\gamma_i$, so condition (iv) in the definition of the data set ensures that the kernel of $\rho$ is torsion-free. Therefore, the orbifold covering $S \to \O$ corresponding to the kernel is a manifold, and calculation of the Euler characteristic shows that $S = S_g$. Thus we obtain a $C_n$-action $t$ on $S_g$ with $r$ distinguished fixed points $\p(t)$. We now construct $\o_t$ from $\o_{\D}$ in the following manner. For each pair $(p,m_p) \in \o_{\D}$, write $p = (a,b)$. If $a=0$, then choose $m_p$ orbits of size $n$ (if $t$ is a free action, this choice is trivial, but otherwise, such an orbit would always exist in a small neighbourhood around any fixed point of $t$). If $a\neq 0$, then there exists a cone point in $\O$ of degree $b$, so there exists an orbit of $t$ of size $n/b$ in $S_g$. Once again, by considering a small neighbourhood of this orbit, we may choose $m_p$ distinct orbits $\{\o_p^1, \o_p^2, \ldots, \o_p^{m_p}\}$ and set
$$
\o(t) := \bigsqcup_{(p,m_p) \in \o_{\D}} \{\o_p^1, \o_p^2, \ldots, \o_p^{m_p}\},
$$
which in turn gives $\o_t = \o_{\D}$.

It remains to show that the resulting action on $S_g$ is determined upto our equivalence in $\Mod(S_g)$. Suppose that two permuting $(n,r,k)$-actions $t$ and $t'$ have the same permuting $(n,r,k)$-data set $(\D,\o_{\D})$. $\D$ encodes the fixed point data of the periodic transformation $t$, so by a result of J. Nielsen~\cite{N1} (or by a subsequent result of A. Edmonds~\cite[Theorem 1.3]{AE}), $t$ and $t'$ have to be conjugate by an orientation-preserving homeomorphism $\phi$. Let $\O'$ be the quotient orbifold of the action $t'$, and $\rho': \pi_1^{orb}(\O') \to C_n$ be the induced representations. Then $\phi$ induces a map $\varphi_{\#} : \pi_1^{orb}(\O) \to \pi_1^{orb}(\O')$ such that $\rho'\circ \varphi_{\#} = \rho$ as in~\cite[Theorem 2.1]{MK1}. If $\gamma$ is a loop around a cone point in $\O$, then $\rho(\gamma)$ is a loop around a cone point in $\O'$, and these cone points are associated to the same pair in $\D$ since $\rho'(\varphi_{\#}(\gamma)) = \rho(\gamma)$. Hence, $\phi$ maps $\p(t)$ to $\p(t')$ and $\o(t)$ to $\o(t')$ as in Definition \ref{defn:eq_perm_actions}.  Furthermore, $\o_t = \o_{\D} = \o_{t'}$ by construction, and hence the permuting data set determines $t$ upto equivalence.
\end{proof}

\section{Nonseparating multicurves}
\label{sec:nonsepmulcurves}

Recall that a multicurve $\C$ is said to be nonseparating if it does not contain any pseudo-nonseparating submulticurves. In this section, we establish that a root of $t_{\C}$ corresponds to a special kind of permuting action on the connected surface $S_g(\C)$. 

\begin{defn}\label{defn:orbit_equiv_sep}
Let $t_i$ be a permuting $(n_i,r_i,k_i)$-action on $S_{g_i}$ for $i =1,2$. Two orbits $\o_i \in \o(t_i)$ are said to be \textit{equivalent} (in symbols, $\o_1 \sim \o_2$) if
\begin{enumerate}[(i)]
\item  $|\o_1| = |\o_2|$, and 
\item if $|\o_1| < n := \lcm(n_1,n_2)$, then we further require that $$\theta_{\o_1}(t_1) + \theta_{\o_2}(t_2) \equiv 2\pi/n \pmod{2\pi}.$$
\end{enumerate}
\end{defn}
\noindent In this section, we will only need the case when $t_1 = t_2$, but we will need the general case in Section~\ref{sec:sepmulcurves}. 

%\begin{defn}\label{def:orbit_equiv}
%Let $t$ be a permuting $(n,r,k)$-action on $S_g$, and let $\o_1, \o_2 \in \o(t)$ be two distinguished orbits. We say that $\o_1$ is equivalent to $\o_2$ (denoted by $\o_1\sim \o_2$) if  
%\begin{enumerate}[(i)]
%\item $|\o_1| = |\o_2|$, and 
%\item if $|\o_1| < n$, we further require that $\theta_{\o_1}(t) + \theta_{\o_2}(t) \equiv 2\pi/n \pmod{2\pi}$.
%\end{enumerate}
%%Note that if $t$ is a free permuting $(n,0,k)$-action then any two of its distinguished orbits are equivalent. 
%\end{defn}
%\noindent We will see in Theorem~\ref{thm:actions-nonseproots} the reason for the distinction made between orbits of size $n$ and those of size $<n$ in the above definition.

\begin{defn}
\label{def:nesnl2m}
Let $\C$ be a nonseparating multicurve in $S_g$. A permuting $(n,2r,2k)$-action $t$ on $S_g(\C)$ is said to be \textit{nonseparating with respect to $\C$} if there exists a $(r,k)$-partition $\p_{r,k}(\C)$ of $\C$ such that
\begin{enumerate}[(i)]
\item there exists $r$ mutually disjoint pairs $\{P_i,P_i'\}$ of distinguished fixed points in $\p(t)$ such that $\theta_{P_i}(t) + \theta_{P_i'}(t) = 2\pi/n$ modulo $2\pi$, for $1 \leq i \leq r$, and
\item there exists $k$ mutually disjoint pairs $\{\o_i,\o_i'\}$ of distinguished nontrivial orbits in $\o_t$ such that $\o_i\sim \o_i'$, for $1 \leq i \leq k$,  and \item $S(\p_{r,k}) = S(t)$. 
\end{enumerate}
\end{defn}

\begin{thm}
\label{thm:actions-nonseproots}
Let $\C$ be a nonseparating multicurve in $S_g$. Then for $n \geq 1$, equivalence classes of  permuting $(n,2r,2k)$-actions on $S_g(\C)$ that are nonseparating with respect to $\C$ correspond to the conjugacy classes in $\Mod(S_g)$ of $(r,k)$-permuting roots of $t_{\C}$ of degree $n$.
\end{thm}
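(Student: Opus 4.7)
The plan is to prove this by constructing explicit maps in both directions and then checking that they descend to the equivalence/conjugacy classes. This generalizes the strategy used for single curves in~\cite{MK1} and for separating curves in~\cite{KR1}, with the added bookkeeping of permuted orbits.

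For the forward direction, start with an $(r,k)$-permuting root $h$ of $t_{\C}$ of degree $n$. By Remark~\ref{rem:root_isotopy}, I may assume $h(\C) = \C$, and the induced partition $\C_{r,k}(h) = \{\C_1',\ldots,\C_r',\C_1,\ldots,\C_k\}$ is as in Definition~\ref{defn:rkpartition}. Since $h^n = t_{\C}$ is isotopic to the identity away from a closed annular neighbourhood $N$ of $\C$, and $h$ permutes the components of $N$, I can replace $h$ by an isotopic representative that preserves $N$ setwise. Collapsing $N$ and capping its boundary components produces $S_g(\C)$ and induces a homeomorphism $\bar h$ on it. Because $h^n$ restricted to the complement of $N$ is the identity, $\bar h^n$ is isotopic to the identity, and by Nielsen realization \cite{K1} I may take $\bar h$ to be a finite order homeomorphism representing a $C_n$-action $t$. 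Now associate distinguished data: for each singleton $\C_i' = \{c_i'\}$, the two boundary circles capped off produce a fixed-point pair $\{P_i,P_i'\}$; for each non-singleton $\C_i$ of size $m_i$, the $2m_i$ boundary circles capped off produce two orbits $\{\o_i,\o_i'\}$ each of size $m_i$. Setting $\p(t) = \bigsqcup\{P_i,P_i'\}$ and $\o(t) = \bigsqcup\{\o_i,\o_i'\}$ yields a permuting $(n,2r,2k)$-action.

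The essential compatibility, namely $\theta_{P_i}(t)+\theta_{P_i'}(t)\equiv 2\pi/n\pmod{2\pi}$ for the fixed pairs and $\o_i\sim\o_i'$ for the orbit pairs, is forced by the twist. Working in a collar of a curve $c_i'$, $h^n$ restricts to a full Dehn twist, and gluing the two capped sides back along a standard model annulus forces the sum of the local rotation numbers of $\bar h$ at $P_i$ and $P_i'$ to contribute one full twist after $n$ iterations, giving exactly the relation in Definition~\ref{def:nesnl2m}(i). For a non-singleton $\C_i$, the cyclic permutation means $h$ sends each curve to the next; taking the $m_i$-th power lands one on a single curve which under $h^n$ is twisted, and an analogous annulus calculation yields the orbit equivalence of Definition~\ref{defn:orbit_equiv_sep}. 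Condition $S(\p_{r,k})=S(t)$ is immediate from the construction. Conjugate roots clearly produce equivalent actions in the sense of Definition~\ref{defn:eq_perm_actions}, since the conjugating homeomorphism descends after an isotopy that preserves $\C$.

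For the reverse direction, start with a permuting $(n,2r,2k)$-action $t$ on $S_g(\C)$ that is nonseparating with respect to $\C$, together with a distinguished $(r,k)$-partition of $\C$ and pairings $\{P_i,P_i'\}$ and $\{\o_i,\o_i'\}$ of the distinguished data. Remove small invariant disks around each $P_i$, $P_i'$ and around each point of each $\o_i$, $\o_i'$, then reattach $1$-handles pairing $\partial D_{P_i}$ to $\partial D_{P_i'}$ and pairing the disks around matched orbit points in the order prescribed by the chosen pairing. This recovers $S_g$ together with the multicurve $\C$, so $t$ extends over each $1$-handle. The angle condition of Definition~\ref{def:nesnl2m}(i) lets me choose the gluing on the handles over singletons so that the local model of $t^n$ in a neighbourhood of $c_i'$ is exactly one full left Dehn twist about $c_i'$; the orbit equivalence condition lets me choose the gluings on the handles over each $\C_i$ so that the $m_i$ handles are cyclically permuted by $t$ and the composition $t^n$ on the model neighbourhood of $\C_i$ is the product of the $m_i$ Dehn twists. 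Away from the handles $t^n$ is already trivial up to isotopy, so altogether $t^n$ is isotopic to $t_{\C}$, producing an $(r,k)$-permuting root $h$ of degree $n$. Equivalent actions give conjugate roots because a homeomorphism realizing the equivalence can be extended across the handles compatibly with the pairings.

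The main obstacle is the careful twist bookkeeping on the $1$-handles, in particular verifying that the local rotation condition $\theta_{P_i}(t)+\theta_{P_i'}(t)\equiv 2\pi/n$ corresponds exactly to one full Dehn twist after $n$ iterations, and that the orbit equivalence of Definition~\ref{defn:orbit_equiv_sep} gives the correct twist when the $m_i$ parallel handles are cyclically permuted. This is the analogue in our permuting setting of~\cite[Lemma 3.1]{MK1}; the argument reduces to a model computation on a neighbourhood of a single curve (for singletons) or on a neighbourhood of a cyclically permuted family (for non-singletons), and the formula for $\theta_{\o}(t)$ given in Notation~\ref{defn:orbit_distribution}(iii) is precisely engineered to make this computation work. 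Once this local calculation is in hand, the rest is a matter of packaging the fixed-point/orbit data consistently with Definition~\ref{defn:eq_perm_actions}, which matches the hypotheses on $t$ verbatim.
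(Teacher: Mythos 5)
Your proposal is correct and follows essentially the same route as the paper: restrict $h$ to the complement of a neighbourhood of $\C$, use Nielsen--Kerckhoff/realization and coning to get the finite-order action, obtain the fixed-point angle condition from the single-curve result of \cite{MK1} and the orbit condition by applying it to $h^{m_i}$ (which fixes a curve of $\C_i$), and reverse the construction by deleting invariant disks and reattaching annuli with the appropriate fractional twists, extending conjugating homeomorphisms across the handles. The only point you gloss that the paper treats explicitly is the case $|\o_i|=n$, where both rotation angles vanish and one must compose the extension with an honest full Dehn twist $t_{c_{i,1}}$ on one handle.
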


\begin{proof}
First, we shall prove that a conjugacy class of an $(r,k)$-permuting root $h$ of $t_{\C}$ of degree $n$ yields an equivalence class of a permuting $(n,2r,2k)$-action that is nonseparating with respect to $\C$. We assume without loss of generality that $r,k > 0$, with the implicit understanding that, when either of them is zero, the corresponding arguments may be disregarded.

Let $\C_{r,k}(h) = \{\C_1', \C_2',\ldots, \C_r', \C_1,\C_2,\ldots, \C_k\}$ be the $(r,k)$-partition of $\C$ associated with $h$. Choose a closed tubular neighborhood $N$ of $\C$, and consider $S_g(\C)$ as in Definition \ref{defn:sgc}. By isotopy, we may assume that $t_{\C}(\C)=\C$, $t_{\C}(N)=N$, and  $t_{\C}\vert_{\widehat{S_g(\C)}}=\text{id}_{\widehat{S_g(\C)}}$. Suppose that $h$ is a root of $t_{\C}$ of degree $n$, then by Remark \ref{rem:root_isotopy}, we may assume that $h$ preserves $\mathcal{C}$ and takes $N$ to $N$.

By the Nielsen-Kerckhoff theorem~\cite{K1}, $\hat{t} := h\vert_{\widehat{S_g(\C)}}$ is isotopic to a homeomorphism whose $n^{th}$ power is $\text{id}_{\widehat{S_g(\C)}}$. So we may change $h$ by isotopy so that $\hat{t}^n = \text{id}_{\widehat{S_g(\C)}}$. We fill in the $2m$ boundary circles of $\widehat{S_g(\C)}$ with disks and extend $\hat{t}$ to a homeomorphism $t$ on $S_g(\C)$ by coning. Thus $t$ defines a effective $C_n$-action on $S_g(\C)$, where $C_n=\langle t\;\vert\;t^n=1\rangle$.

The $C_n$-action $t$ fixes the centers $P_i$ and $P_i'$ of the $2r$ disks $D_i$ and $D_i'$, $1 \leq i \leq r$, of $\overline{S_g\setminus \widehat{S_g(\C)}}$ whose boundaries are the components of $\partial N$ which are fixed by $t$. The orientation of $S_g$ determines one for $S_g(\C)$, so we may speak of directed angles of rotation about the centres of these disks. Since $h^n = t_{\C}$, it follows from  \cite[Theorem 2.1]{MK1} that
$$
\theta_{P_i}(t) + \theta_{P_i'}(t) = 2\pi/n \pmod{2\pi},
$$
as illustrated in Figure~\ref{fig:twist} below. 
\begin{figure}[h]
\labellist
\tiny
\pinlabel $P_i$ at 75 80
\pinlabel $P_i'$ at 270 115
\pinlabel $\theta_{P_i}(t)$ at 85 140
\pinlabel $\theta_{P_i'}(t)$ at 295 52
%\pinlabel $q_1$ at 686 103
%\pinlabel $q_2$ at 645 75
\endlabellist
\centering
\includegraphics[width = 50 ex]{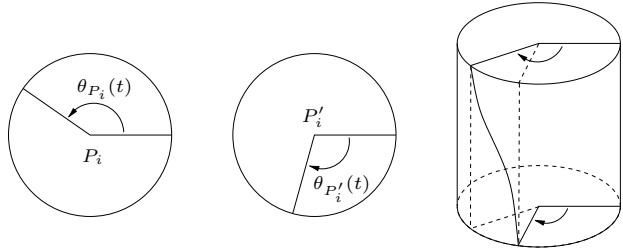}
\caption{Angle compatibility at each pair $\{P_i,P_i'\} \subset \p(t)$.}
\label{fig:twist}
\end{figure}

The remaining disks occuring in $\overline{S_g \setminus \widehat{S_g(\C)}}$ form $k$ pairs of orbits of sizes $m_1, m_2,\ldots, m_k$ where $S(\C_{r,k}(h)) = \llbrace m_1,m_2,\ldots, m_k\rrbrace$. For $1 \leq j \leq k$, we denote the centres of these pairs of disks by $Q_{i,j}$ and $Q_{i,j}'$, and the orbits of these centres by $\o_j$ and $\o_j'$. Thus $t$ is a permuting $(n,2r,2k)$-action with $\p(t) = \{P_1,P_1',\ldots,P_r,P_r'\}$ and $\o(t) = \{\o_1,\o_1',\ldots,\o_k,\o_k'\}$.

It remains to show that $\o_i \sim \o_i'$ for each $i$. By construction, $|\o_i|=|\o_i'| = m_i$, and if $m_i =n$, then $\o_i\sim \o_i'$ holds trivially. If not, then we need to show that the angle compatibility condition from Definition \ref{def:orbit_equiv} holds. Write $\o_i= \{Q_{i,1},Q_{i,2},\ldots, Q_{i,m_i}\}$, $\o_i' = \{Q_{i,1}',Q_{i,2}',\ldots, Q_{i,m_i}'\}$ and note that $h^{m_i}$ is an $(n/m_i)^{\text{th}}$ root of $t_{\C}$ such that $h^{m_i}(c_{i,1}) = c_{i,1}$. Hence,
$$
\theta_{Q_{i,1}}(t^{m_i}) + \theta_{Q_{i,1}'}(t^{m_i}) = 2\pi/(n/m_i) \pmod{2\pi},
$$
which implies that
$$
m_i\theta_{\o_i}(t) + m_i\theta_{\o_i'}(t) \equiv 2\pi/(n/m_i) \pmod{2\pi}.
$$
Since $t^n$ is a restriction of a single Dehn twist, it follows that 
$$
\theta_{\o_i}(t) + \theta_{\o_i'}(t) \equiv 2\pi/n \pmod{2\pi}.
$$
Hence $\o_i \sim \o_i'$, and we obtain a permuting $(n,2r,2k)$-action $t$ on $S_g(\C)$ that is nonseparating with respect to $\C$. 

Now suppose $h_1, h_2\in \Mod(S_g)$ are two roots of $t_{\C}$ that are conjugate in $\Mod(S_g)$ via $\Phi \in \Mod(S_g)$, and let $t_s$ denote the finite order homeomorphisms on $S_g(\C)$ induced by $h_s$, for $s = 1,2$. Then $t_{\C} = \phi t_{\C}\Phi^{-1} = t_{\Phi(\C)}$, so we may assume upto isotopy that $\Phi(\C) = \C$ (as in Remark \ref{rem:root_isotopy}) and that $\Phi(N) = N$. We extend $\Phi\mid_{\widehat{S_g(\C)}}$ to an element $\phi \in \Mod(S_g(\C))$ by coning. Now, $\phi$ maps $\p(t_1)$ to $\p(t_2)$, and $\o(t_1)$ to $\o(t_2)$ bijectively as in Definition~\ref{defn:eq_perm_actions}. Since the $h_s$ and $\Phi$ all preserve $N$, $\phi t_1\phi^{-1}$ is isotopic to $t_2$ preserving $\p(t_2)$ and $\o(t_2)$. Furthermore, for each $\o \in \o(t_1)$, $p(\phi(\o)) = p(\o)$. Hence, $\o_{t_1} = \o_{t_2}$ and so $t_1$ and $t_2$ will be equivalent permuting $(n,2r,2k)$-actions.

Conversely, given a permuting $(n,2r,2k)$-action $t$ on $S_g(\C)$ that is nonseparating with respect to $\C$, we can reverse the argument to produce the $(r,k)$-permuting root $h$. Let $\p(t) = \{P_1, P_1', \ldots, P_r, P_r'\}$ and $\o(t) = \{\o_1,\o_1', \ldots, \o_k,\o_k'\}$. For $1\leq i\leq r$, we remove disks $D_i$ and $D_i'$ invariant under the action of $t$ around the $P_i$ and $P_i'$ and attaching $r$ annuli to obtain the surface $S_g(\C\setminus \C')$.  The condition on the angles $\{\theta_{P_i}(t), \theta_{P_i'}(t)\}$ ensures that the rotation angles work correctly to allow an extension of $t$ to obtain an $h_0$ with $h_0^n = t_{\C'}$ in $\Mod(S_g(\C\setminus \C'))$, where $\C'= \cup_{i=1}^r \C_i'$.

Now write $\o_1 = \{Q_{1,1}, Q_{1,2}, \ldots, Q_{1,m_1}\}$ and consider disks $D_{1,i}$ around $Q_{1,i}$ such that $t(D_{1,i}) = t(D_{1,i+1})$. Similarly, write $\o_1'=\{Q_{1,1}',Q_{1,2}',\ldots, Q_{1,m_1}'\}$ and consider disks $D_{1,i}'$ as earlier. Then we attach $m_1$ annuli connecting $\partial D_{1,i}$ to $\partial D_{1,i}'$. Each such annulus contains a nonseparating curve $c_{1,i}$, which is unique unto isotopy. Repeating this process for $1\leq i\leq m_1$, we obtain the surface $S_g(\C\setminus (\C'\cup\C_1))$. Since $t(D_{1,i}) = D_{1,i+1}$, we may extend the homeomorphism $h_0$ to a homeomorphism $\widetilde{h_0} \in \Mod(S_g(\C'\cup \C_1))$, which cyclically permutes the $c_{1,i}$. If $|\o_1| = |\o_1'| = n$, then define $h_1 := \widetilde{h_0}t_{c_{1,1}}$. Otherwise, since $\o_1 \sim \o_1'$, the difference in the turning angles around $Q_{1,i}$ and $Q_{1,i}'$ is $2\pi/n$. Let $\widetilde{h_1}$ be the $(1/n)^{th}$-twist around $c_{1,1}$. Now $h_1 := \widetilde{h_0}\widetilde{h_1}$ is an $(r,1)$-permuting root of $t_{\C' \cup \C_1}$ of degree $n$ in $\Mod(S_g(\C \setminus (\C' \cup \C_1)))$. We now repeat this process inductively to obtain an $(r,k)$-permuting root $h:= h_k\in \Mod(S_g)$ of $t_{\C}$ of degree $n$.

It remains to show that the resulting root $h$ of $t_{\C}$ is determined up to conjugacy. Suppose $t_1$ and $t_2$ are two equivalent $(n,2r,2k)$-actions on $S_g(\C)$ that are nonseparating with respect to $\C$ with $\p(t_s) = \{P_{s,1},P_{s,2},\ldots, P_{s,r}\} \text{ and }\o(t_s) = \{\o_{s,1},\o_{s,2},\ldots, \o_{s,k}\}$, for $s=1,2$. Let $\phi \in \Mod(S_g(\C))$ be an orientation-preserving homeomorphism satisfying the conditions in Definition \ref{defn:eq_perm_actions}. Then repeating the argument from \cite[Theorem 2.1]{MK1}, $\phi$ extends to a homeomorphism $\Phi_0 \in \Mod(S_g(\C\setminus \C'))$ such that $\Phi_0 h_{1,0} \Phi_0^{-1} = h_{2,0}$, where $h_{s,0}$ is the root of $t_{\C'}$ obtained from $t_s$, for $s=1,2$, as above. Furthermore, since $\phi$ maps $\o_{1,i}$ to $\o_{2,i}$ as in Definition \ref{defn:eq_perm_actions}, we may once again extend $\Phi_0$ to a homeomorphism $\Phi \in \Mod(S_g)$ satisfying $\Phi h_1 \Phi^{-1} = h_2$, where $h_s$ is the root of $t_{\C}$ obtained from $t_s$, for $s=1,2$.
\end{proof}

\begin{defn}
\label{def:nonsep-dataset}
A permuting $(n,2r,2k)$-data set $(\D,\o_{\D})$ is called \emph{nonseparating} if
\begin{enumerate}[(i)]
\item 
$\displaystyle \D = (n,g_0,(a_1,a_1',\ldots,a_r,a_r');(c_1,n_1),\ldots,(c_s,n_s)),$ 
where  \\ $a_i + a_i' \equiv a_ia_i' \bmod n,$ for $1 \leq i \leq r$
\item For each $(p,m_p) \in \o_{\D}$, there exists $(p',m_p') \in \o_{\D}$ such that
$m_p = m_p'$ and 
$$
\theta(p) + \theta(p') = \begin{cases}
0 &: p = p' = (0,1) \\
2\pi/n \pmod{2\pi} &: \text{otherwise.}
\end{cases}
$$

\end{enumerate}
\end{defn} 

\noindent We now deduce the following theorem from Theorems \ref{thm:action_triple_correspondence} and \ref{thm:actions-nonseproots}.

\begin{thm}
Let $\C$ be a nonseparating multicurve in $S_g$. For $n\geq 1$, equivalence classes of nonseparating permuting $(n,2r,2k)$-data sets of genus $g_{\C}$ correspond to equivalence classes of permuting $(n,2r,2k)$-actions on $S_g(\C)$ that are nonseparating with respect to $\C$.
\end{thm}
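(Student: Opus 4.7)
The plan is to deduce the theorem by applying Theorem~\ref{thm:action_triple_correspondence} to the connected surface $S_g(\C)$ (which has genus $g_{\C}$ since $\C$ is nonseparating) and then checking that, under the resulting bijection, the \emph{nonseparating} conditions on both sides match. Theorem~\ref{thm:action_triple_correspondence} already delivers a bijection between equivalence classes of permuting $(n,2r,2k)$-data sets of genus $g_{\C}$ and equivalence classes of permuting $(n,2r,2k)$-actions on $S_g(\C)$; the remaining content is that the algebraic pairing conditions of Definition~\ref{def:nonsep-dataset} are equivalent to the topological pairing conditions of Definition~\ref{def:nesnl2m}.

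For the fixed-point part, I would use the convention implicit in the proof of Theorem~\ref{thm:action_triple_correspondence}: if $\rho(\alpha_i)=t^{a_i}$, then the rotation about the corresponding lift satisfies $\theta_{P_i}(t) \equiv 2\pi a_i^{-1}/n \pmod{2\pi}$, consistent with Notation~\ref{defn:orbit_distribution}(iii) (a small loop around a cone point pulls back to the inverse of the local rotation). A short calculation then gives
\begin{equation*}
\theta_{P_i}(t) + \theta_{P_i'}(t) \equiv \tfrac{2\pi}{n}\pmod{2\pi}
\;\Longleftrightarrow\; a_i^{-1} + (a_i')^{-1} \equiv 1 \pmod n
\;\Longleftrightarrow\; a_i + a_i' \equiv a_i a_i' \pmod n,
\end{equation*}
which identifies Definition~\ref{def:nonsep-dataset}(i) with Definition~\ref{def:nesnl2m}(i).

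For the orbit part, note that each pair $(p,m_p)\in \o_{\D}$ records the local orbit type of $m_p$ orbits of $t$ of common size $n/b$ where $p=(a,b)$, and the pairing condition in Definition~\ref{def:nonsep-dataset}(ii) (together with $m_p=m_{p'}$) matches the $2k$ orbits in $\o(t)$ into $k$ pairs $\{\o_i,\o_i'\}$ whose sizes agree and whose turning angles satisfy $\theta_{\o_i}(t)+\theta_{\o_i'}(t)\equiv 2\pi/n\pmod{2\pi}$ (or are both $0$ when $|\o_i|=n$); this is exactly the relation $\o_i\sim \o_i'$ of Definition~\ref{defn:orbit_equiv_sep}. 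The remaining multiset compatibility required in Definition~\ref{def:nesnl2m}(iii) comes for free: reversing the construction of Theorem~\ref{thm:actions-nonseproots} builds $\C$ from the action by joining each paired set of disks by annuli, so the $r$ singletons and the $k$ subsets of $\C$ automatically have sizes matching the common orbit lengths in the $k$ pairs.

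The main obstacle, as suggested above, is keeping the conventions straight in the angle-versus-residue translation, particularly the inverse that appears when passing between the monodromy exponent $a_i$ and the geometric rotation angle $\theta_{P_i}(t)$; once this is pinned down, the quadratic relation $a_i+a_i'\equiv a_i a_i'\pmod n$ emerges cleanly and the rest of the argument is a direct bookkeeping exercise overlaid on Theorem~\ref{thm:action_triple_correspondence}.
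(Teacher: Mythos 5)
Your proposal matches the paper's treatment: the paper gives no separate argument for this theorem, deducing it from Theorem~\ref{thm:action_triple_correspondence} (applied to $S_g(\C)$, of genus $g_{\C}$) plus a translation of the pairing conditions, and your angle-versus-residue computation (with $\theta_{P_i}(t)=2\pi a_i^{-1}/n$, so the sum condition $2\pi/n$ becomes $a_i+a_i'\equiv a_ia_i'\pmod n$, and likewise for the orbit types via $\theta((a,b))=2\pi a^{-1}/b$) is exactly the intended content. The only soft spot --- inherited from the paper's own formulation rather than introduced by you --- is the counting needed for Definition~\ref{def:nesnl2m}(iii), namely that $r$ plus the paired orbit sizes must equal $|\C|=g-g_{\C}$ so that the required $(r,k)$-partition of the given $\C$ exists; your ``comes for free'' remark glosses over this no more than the paper does.
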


\begin{cor}\label{cor:nonsepdsets-nonseproots}
Let $\C$ be a nonseparating multicurve in $S_g$. For $n\geq 1$, equivalence classes of nonseparating permuting $(n,2r,2k)$-data sets of genus $g_{\C}$ correspond to conjugacy classes in $\Mod(S_g)$ of $(r,k)$-permuting roots of $t_{\C}$ of degree $n$.
\end{cor}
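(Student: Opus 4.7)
The plan is to obtain this corollary as a direct composition of two bijections already established in the paper, so essentially no new mathematical content is needed beyond verifying that the intermediate object (equivalence classes of permuting $(n,2r,2k)$-actions on $S_g(\C)$ nonseparating with respect to $\C$) is the same on both sides.

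First, I would invoke the theorem stated immediately before the corollary, which sets up a bijection between equivalence classes of nonseparating permuting $(n,2r,2k)$-data sets of genus $g_{\C}$ and equivalence classes of permuting $(n,2r,2k)$-actions on $S_g(\C)$ that are nonseparating with respect to $\C$. This theorem is itself obtained from Theorem~\ref{thm:action_triple_correspondence} by checking that the two extra conditions defining a nonseparating data set (the pairing $a_i + a_i' \equiv a_i a_i' \pmod n$ on the distinguished fixed point data, and the pairing of orbit types $(p,m_p),(p',m_p')$ with $m_p=m_p'$ and matching $\theta$-sum) correspond exactly, via the dictionary of Theorem~\ref{thm:action_triple_correspondence}, to the conditions in Definition~\ref{def:nesnl2m} for an action to be nonseparating with respect to $\C$; namely, the fixed-point angle condition $\theta_{P_i}(t)+\theta_{P_i'}(t)\equiv 2\pi/n\pmod{2\pi}$ and the orbit equivalence $\o_i \sim \o_i'$ of Definition~\ref{defn:orbit_equiv_sep}.

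Next, I would apply Theorem~\ref{thm:actions-nonseproots}, which provides the bijection between equivalence classes of permuting $(n,2r,2k)$-actions on $S_g(\C)$ nonseparating with respect to $\C$ and conjugacy classes in $\Mod(S_g)$ of $(r,k)$-permuting roots of $t_{\C}$ of degree $n$. Composing the two bijections gives the desired correspondence between equivalence classes of nonseparating permuting $(n,2r,2k)$-data sets of genus $g_{\C}$ and conjugacy classes of $(r,k)$-permuting roots of $t_{\C}$ of degree $n$.

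There is no real obstacle here; the corollary is essentially formal. The only point that might require a brief word of care is the matching of genera: the data set has genus $g_{\C}$ (the genus of $S_g(\C)$ from Notation~\ref{defn:sgc}), while the action lives on $S_g(\C)$, so the Riemann--Hurwitz relation~\eqref{eqn:riemann_hurwitz} aligns the two notions of genus. Once this is noted, the composition of bijections is immediate.
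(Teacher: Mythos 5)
Your proposal is correct and matches the paper's own (implicit) argument: the corollary is obtained precisely by composing the theorem immediately preceding it (nonseparating data sets of genus $g_{\C}$ correspond to nonseparating permuting actions on $S_g(\C)$, itself deduced from Theorem~\ref{thm:action_triple_correspondence}) with Theorem~\ref{thm:actions-nonseproots}. Your remark about aligning the genus of the data set with the genus of $S_g(\C)$ via the Riemann--Hurwitz relation is the only point of care, and it is handled the same way in the paper.
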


If $\C$ is a nonseparating multicurve of size $m$, then $g_{\C} = g-m$. Hence we obtain the following restrictions on the degree of a root of $t_{\C}$. 

\begin{cor}
\label{cor:bound_nonsep}
Let $\C$ be a nonseparating multicurve in $S_g$ of size $m$, and let $h$ be an $(r,k)$-permuting root of $t_{\C}$ of degree $n$. 
\begin{enumerate}[(i)]
\item If $r\geq 0$, then
$$
n\leq \begin{cases}
4(g-m) + 2, &\text{if } g-m \geq 1 \\
g, &\text{if } g = m.
\end{cases}
$$
Furthermore, if $g=m$, then this upper bound is realizable.
\item If $r\geq 1$, then $n$ is odd.
\item If $r=1$, then $n\leq 2(g-m)+1$. %Furthermore, if $g_{\C}\geq 2$, then $n \leq 2g_{\C}$
\item If $r\geq 2$, then $\displaystyle n \leq \frac{g-m+r-1}{r-1}$.
\end{enumerate}
\end{cor}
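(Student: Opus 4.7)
The overall plan is to invoke Corollary \ref{cor:nonsepdsets-nonseproots}, which converts the problem into extracting bounds on nonseparating permuting $(n,2r,2k)$-data sets of genus $g_\C = g - m$. Every estimate will then be read off the Riemann--Hurwitz identity
\begin{equation*}
2g_\C - 2 \;=\; n(2g_0 - 2) + 2r(n-1) + \sum_{j=1}^s \left(n - \frac{n}{n_j}\right) \qquad (\star)
\end{equation*}
combined with the arithmetic constraints $\gcd(a_i,n) = \gcd(c_j,n_j) = 1$, $n_j \mid n$, the angle-sum $\sum a_i + \sum (n/n_j) c_j \equiv 0 \pmod n$, and the pairing identity $a_i + a_i' \equiv a_i a_i' \pmod n$ of Definition \ref{def:nonsep-dataset}.

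Part (ii) is short: if $n$ were even then $\gcd(a_i,n) = \gcd(a_i',n) = 1$ would force $a_i$ and $a_i'$ to be odd, so $a_i + a_i' - a_i a_i'$ would be odd and hence not divisible by $n$. Part (iv) follows by dropping the nonnegative term $\sum (n - n/n_j)$ in $(\star)$, which gives $g_\C - 1 + r \geq n(g_0 + r - 1)$; for $r \geq 2$ the factor $g_0 + r - 1$ is at least $r - 1 \geq 1$, so division yields $n \leq (g_\C + r - 1)/(g_0 + r - 1) \leq (g_\C + r - 1)/(r-1)$, as claimed.

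Part (iii) is the most delicate; my plan is to stratify on $g_0$ and $s$. If $g_0 \geq 1$ the identity $(\star)$ immediately yields $n \leq g_\C \leq 2g_\C + 1$. If $g_0 = 0$ and $s = 1$, I combine the angle-sum condition with the pairing identity to obtain $(n/n_1) c_1 \equiv -a_1 a_1' \pmod n$; reducing modulo $n/n_1$ and using $\gcd(a_1 a_1', n) = 1$ forces $n_1 = n$, which via $(\star)$ gives exactly $n = 2g_\C + 1$. If $g_0 = 0$ and $s \geq 2$, the oddness of $n$ from (ii) forces every $n_j \geq 3$, so $\sum (n - n/n_j) \geq 4n/3$ and then $n \leq 3g_\C/2 < 2g_\C + 1$.

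For (i) I would split on $g_\C$. The subcase $g_\C \geq 2$ is the classical Wiman bound $n \leq 4g_\C + 2$ on orientation-preserving cyclic actions, and for $g_\C = 1$ the order of a periodic orientation-preserving self-homeomorphism of $T^2$ lies in $\{1,2,3,4,6\}$, which is bounded by $6 = 4g_\C + 2$. The remaining case is $g_\C = 0$: the action $t$ is a rotation of $S_0$ with exactly two fixed points, so $r \leq 1$ and every distinguished nontrivial orbit has size $n$, whence $g = m = r + kn$ and $n \leq g$, with equality at $r = 0$, $k = 1$. Realizability in this case is exhibited by the construction of Figure \ref{fig:perm_s5} with a rotation by $2\pi/g$ of $S_0$ and $g$ twisted handles between two orbits of size $g$, followed by an application of Theorem \ref{thm:actions-nonseproots}. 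The main obstacle I anticipate is the sharp case in (iii), where $n = 2g_\C + 1$ is not a formal consequence of Riemann--Hurwitz but rests on the arithmetic interplay between the angle-sum and pairing conditions forcing $n_1 = n$.
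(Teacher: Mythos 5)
Your proposal is correct, and its skeleton is the paper's: pass to nonseparating permuting $(n,2r,2k)$-data sets of genus $g-m$ via Corollary \ref{cor:nonsepdsets-nonseproots}, use Wiman's bound for (i) when $g-m\geq 1$, analyze the rotation of $S_0$ when $g=m$ (with the same realization, a rotation by $2\pi/g$ composed with a twist along one handle-curve), and read (iv) off Riemann--Hurwitz after discarding the nonnegative cone-point terms. The genuine difference is in (ii) and (iii): the paper simply cites \cite[Corollary 2.2]{MK1}, whereas you rederive both from the data-set arithmetic --- (ii) from the parity of $a_i+a_i'-a_ia_i'$ when $n$ is even, and (iii) by stratifying on $(g_0,s)$, the key point being that for $g_0=0$, $s=1$ the congruence $(n/n_1)c_1\equiv -a_1a_1'\pmod n$ together with $\gcd(a_1a_1',n)=1$ forces $n_1=n$ and hence exactly $n=2(g-m)+1$; this is essentially the argument of \cite{MK1} reconstructed in the present setting, so it buys self-containedness at the cost of length. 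Two small points to tidy: in (iii) you skip the stratum $g_0=0$, $s=0$ (and implicitly assume every $n_j\geq 2$); there the angle-sum plus the pairing identity give $a_1a_1'\equiv 0\pmod n$, hence $n=1$ and the bound is trivial, and any entry with $n_j=1$ contributes nothing to either the congruence or Riemann--Hurwitz, so it may be discarded. In (i) with $g=m$ you only assert $r\leq 1$, while the compatibility condition $\theta_{P}(t)+\theta_{P'}(t)=2\pi/n\pmod{2\pi}$ in fact rules out pairing the two poles of a nontrivial rotation of $S_0$ (their angles sum to $0$), so $r=0$ as the paper notes; your conclusion $m=r+kn$ and $n\leq m=g$ survives either way, since $k=0$ would force $m\leq 1$, which is excluded. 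Your separate treatment of $g-m=1$ (torus orders lie in $\{1,2,3,4,6\}$, bounded by $4(g-m)+2$) is a harmless refinement of the paper's blanket appeal to Wiman.
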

\begin{proof}
If $r\geq 0$ and $g-m\geq 1$, then by a result of Wiman \cite[Theorem 6]{H1}, the highest order of a cyclic action on $S_g(\C)$ is $4(g-m) + 2$.  If $r\geq 0$ and $g=m$, then consider the permuting $(n,2r,2k)$-action $t$ on $S_0$ that is nonseparating with respect to $\C$ guaranteed by Theorem \ref{thm:actions-nonseproots}. Since $t \in \Mod(S_0)$ is an element of order $n$, it must be a rotation by $2\pi/n$ radians. Since the two fixed points of this action are not compatible in the sense of Definition \ref{def:nesnl2m}, $r=0$ and every non-trivial orbit has size $n$. Hence, $m = nk$ and so $n\mid m$, and in particular, $n\leq m=g$. Furthermore, if $m = g$, let $t$ be the rotatiuon of the sphere by $2\pi/n$, and $c \in \C$ be any curve. Then $h := tt_c$ is a root of $t_{\C}$ of degree $m$ and (i) follows. 

(ii) and (iii) follow from \cite[Corollary 2.2]{MK1}. If $r\geq 2$, then consider the corresponding permuting $(n,2r,2k)$-data set from Theorem \ref{thm:action_triple_correspondence}. Note that the genus $(g-m)$ of the permuting data set is given by
$$\frac{2-2(g-m)}{n} = 2-2g_0 + 2r\left(\frac{1}{n}-1 \right) + \sum_{j=1}^s \left(\frac{1}{n_j} - 1 \right).$$ Since $(1-1/n_j) \geq 0$ and $g_0 \geq 0,$ we have $(g-m) - 1 +r \geq n(-1+r),$ from which (iv) follows.
\end{proof}

\section{Separating Multicurves}
\label{sec:sepmulcurves}

A separating multicurve $\C$ in $S_g$ is the disjoint union of finitely many pseudo-nonseparating curves. In this case $S_g(\C)$ is disconnected, so we will require multiple finite order actions on the individual components of $S_g(\C)$ to come together to form a root of $t_{\C}$ on $S_g$. To improve the exposition, we begin with the case $\C = \C^{(m)}$ so that $S_g(\C)$ has two components. 
%
%\begin{defn}\label{defn:orbit_equiv_sep}
%Let $t_i$ be a permuting $(n_i,r_i,k_i)$-action on $S_{g_i}$ for $i =1,2$. Two orbits $\o_i \in \o(t_i)$ are said to be \textit{equivalent} (in symbols, $\o_1 \sim \o_2$) if
%\begin{enumerate}[(i)]
%\item  $|\o_1| = |\o_2|$, and 
%\item if $|\o_1| < n := \lcm(n_1,n_2)$, then we further require that $$\theta_{\o_1}(t_1) + \theta_{\o_2}(t_2) \equiv 2\pi/n \pmod{2\pi}.$$
%\end{enumerate}
%\end{defn}

\begin{defn}\label{defn:mpair_permact}
Equivalence classes $\l t_i\r$ of permuting $(n_i,r_i,k_i)$-actions on $S_{g_i}$, for $i = 1,2$, are said to form an \textit{(r,k)-compatible pair $(\l t_1 \r, \l t_2 \r)$ of degree $n$} for integers $r,k \geq 0$, if 
\begin{enumerate}[(i)]
\item $n = \lcm(n_1,n_2)$, and
\item there exists $\{P_{i,1}, P_{i,2},\ldots, P_{i,r}\} \subset \p(t_i)$ such that for $1\leq j\leq r$,
$$
\displaystyle \theta_{P_{1,j}}(t_1) + \theta_{P_{2,j}}(t_2) = \frac{2\pi}{n} \pmod{2\pi} \text{, and }
$$
\item there exists $\{\o_{i,1},\o_{i,2},\ldots, \o_{i,k}\} \subset \o(t_i)$ for $i=1,2$ such that $\o_{1,j} \sim \o_{2,j}$ as in Definition~\ref{defn:orbit_equiv_sep}, for $1\leq j\leq k$.
\end{enumerate}
If
$$
\alpha := \sum_{j=1}^k |\o_{1,j}| = \sum_{j=1}^k |\o_{2,j}|
$$
then the number $g := g_1+g_2+r+\alpha - 1$ is called the \emph{genus} of the pair $(\l t_1\r, \l t_2\r)$. Note that two actions are $(1,0)$-compatible if they are compatible as nestled actions in the sense of \cite[Definition 3.2]{KR1}.
\end{defn}

\begin{notation}
Let $(\l t_1\r, \l t_2\r)$ be an $(r,k)$-compatible pair of degree $n$ as in Definition \ref{defn:mpair_permact}. We write $\p(t_1,t_2):= \{P_{1,1},P_{1,2},\ldots, P_{1,r}\}$ and $\o(t_1,t_2):= \{\o_{1,1},\o_{1,2},\ldots, \o_{1,k}\}$. We define $\p(t_2,t_1)$ and $\o(t_2,t_1)$ similarly.
\end{notation}

\begin{lemma}\label{lem:powers_of_twists}
Let $\C=\{c_1,c_2,\ldots, c_m\}$ be a multicurve on $S_g$, and $N_i$ be annular neighbourhood of $c_i$. Write $N = \sqcup_{i=1}^m N_i$, and suppose $t\in \Mod(S_g)$ is such that $t\lvert_{S_g\setminus N} = \text{id}_{S_g\setminus N}$, then $\exists d_1,d_2,\ldots, d_m\in \mathbb{N}\cup\{0\}$ such that $t = t_{c_1}^{d_1}\ldots t_{c_m}^{d_m}$.
\end{lemma}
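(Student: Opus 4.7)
The plan is to reduce the statement to the standard computation of the mapping class group of an annulus relative to its boundary, and then patch local descriptions together using the disjointness of the $N_i$.

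First, I would pick a homeomorphism representative $\phi$ of $t$ such that $\phi|_{S_g\setminus N} = \text{id}_{S_g\setminus N}$ (which is possible by hypothesis). Since $\partial N_i \subset \overline{S_g\setminus N}$, the map $\phi$ fixes $\partial N_i$ pointwise, and so restricts to a homeomorphism $\phi_i \colon N_i \to N_i$ with $\phi_i|_{\partial N_i} = \text{id}_{\partial N_i}$ for each $i$.

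Next, I would invoke the standard fact (see, e.g., Farb--Margalit) that the mapping class group $\Mod(A,\partial A)$ of an annulus $A$ relative to its boundary is infinite cyclic, generated by the Dehn twist about the core curve. Applied to each $N_i$ with core curve $c_i$, this yields an integer $d_i$ together with an isotopy $H^i_s \colon N_i \to N_i$, relative to $\partial N_i$, from $\phi_i$ to $t_{c_i}^{d_i}$. (The statement phrases $d_i \in \mathbb{N}\cup\{0\}$; in the generality of this lemma one should really read $d_i \in \mathbb{Z}$, and in the applications of this lemma inside the paper the exponents arising from roots of $t_{\C}$ turn out to be non-negative, so the distinction is cosmetic.)

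Finally, since the annular neighbourhoods $N_i$ are pairwise disjoint, I would extend each $H^i_s$ by the identity on $S_g\setminus N_i$ and compose the resulting isotopies. This produces an isotopy from $\phi$ to a homeomorphism that equals $t_{c_i}^{d_i}$ on each $N_i$ and is the identity on $S_g\setminus N$. Because the Dehn twists $t_{c_i}$ have pairwise disjoint supports, they commute, and the pasted map represents the mapping class $t_{c_1}^{d_1}\cdots t_{c_m}^{d_m}$, as required. The only conceptual step is the identification $\Mod(A,\partial A) \cong \mathbb{Z}$; everything else is bookkeeping made trivial by the disjointness of the $N_i$, so I do not expect any real obstacle.
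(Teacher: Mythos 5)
Your proposal is correct and follows essentially the same route as the paper: the paper simply cites the fact that $\Mod(N \,\mathrm{fix}\, \partial N) \cong \oplus_{i=1}^m \langle t_{c_i}\rangle$ (Ivanov's lemma), whereas you reprove this decomposition by restricting to each annulus, using $\Mod(A,\partial A)\cong\mathbb{Z}$ generated by the twist about the core, and pasting the isotopies using disjointness of the $N_i$. Your side remark that the exponents should a priori lie in $\mathbb{Z}$ rather than $\mathbb{N}\cup\{0\}$ is also a fair observation and does not affect the paper's applications of the lemma.
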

\begin{proof}
Since $t\lvert_{S_g\setminus N} = \text{id}_{S_g\setminus N}$, it follows that $t$ fixes $\partial N$. The lemma now follows from the fact \cite[Lemma 4.1.A, Chapter 12]{DS} that $$\Mod(N\text{fix}(\partial N)) \cong \oplus_{i=1}^m \Mod(N_i\text{fix}(\partial N_i)) = \oplus_{i=1}^m \langle t_{c_i}\rangle.$$
\end{proof}

\begin{thm}\label{thm:homologous_root_action}
Suppose that $S_g = S_{g_1}\#_{\C} S_{g_2}$, where $\C = {\C}^{(m)}$. Then $(r,k)$ -permuting roots of $t_{\C}$ of degree $n$ correspond to the $(r,k)$-compatible pairs $(\l t_1\r, \l t_2\r)$ of equivalence classes of permuting $(n_i,r_i,k_i)$-actions on the $S_{g_i}$, of degree $n$.
\end{thm}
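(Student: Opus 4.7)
The plan is to follow the structure of the proof of Theorem~\ref{thm:actions-nonseproots}, but now working with two components of $S_g(\C)$ and, crucially, with two potentially distinct orders $n_1, n_2$ whose least common multiple must equal the degree $n$.

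\textbf{Forward direction.} Given an $(r,k)$-permuting root $h$ of $t_{\C}$ of degree $n$, I would invoke Remark~\ref{rem:root_isotopy} to arrange $h(\C) = \C$ and isotope $h$ so that it preserves a fixed tubular neighborhood $N$ of $\C$. Since $\C = \C^{(m)}$ separates $S_g$ into the two pieces $S_{g_1}, S_{g_2}$, and since the induced $(r,k)$-partition $\C_{r,k}(h)$ is respected by $h$, the homeomorphism $h$ preserves each of the two components of $\widehat{S_g(\C)}$. Applying the Nielsen--Kerckhoff theorem component-wise and coning over the capping disks produces finite-order homeomorphisms $t_i \in \Mod(S_{g_i})$ of orders $n_i$ dividing $n$, whose distinguished fixed points and nontrivial orbits arise from the centres of the capping disks. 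The $r$ singleton blocks of $\C_{r,k}(h)$ give rise to $r$ pairs of distinguished fixed points $\{P_{1,j}, P_{2,j}\}$ with $P_{i,j} \in \p(t_i)$, while each of the $k$ nontrivial blocks produces a pair of nontrivial orbits $\{\o_{1,j}, \o_{2,j}\}$ of matched cardinality.

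\textbf{Matching orders and compatibility.} To see that $n = \lcm(n_1, n_2)$, write $d = \lcm(n_1, n_2)$; then $h^d$ restricts to the identity on $\widehat{S_g(\C)}$, so it fixes $\partial N$ pointwise and hence must send each annular component of $N$ to itself, thereby preserving each $c_i$. By Lemma~\ref{lem:powers_of_twists}, $h^d = \prod_{i=1}^m t_{c_i}^{b_i}$. Writing $n = qd$ and comparing $h^n = (h^d)^q = \prod t_{c_i}^{qb_i}$ with $t_{\C} = \prod t_{c_i}$ forces $q = 1$, so $n = d$. The angle-compatibility condition (ii) of Definition~\ref{defn:mpair_permact} at each pair $\{P_{1,j}, P_{2,j}\}$ is then the usual gluing relation for rotation angles at the two sides of a Dehn twist, exactly as derived in Theorem~\ref{thm:actions-nonseproots} via \cite[Theorem 2.1]{MK1}. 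For each orbit pair, applying the same argument to the appropriate power $h^{|\o_{1,j}|}$ (which fixes a representative curve) and dividing by the orbit size reduces to the fixed-curve case and produces $\theta_{\o_{1,j}}(t_1) + \theta_{\o_{2,j}}(t_2) \equiv 2\pi/n \pmod{2\pi}$, giving $\o_{1,j} \sim \o_{2,j}$ in the sense of Definition~\ref{defn:orbit_equiv_sep}. This produces the $(r,k)$-compatible pair $(\l t_1\r, \l t_2\r)$ of degree $n$.

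\textbf{Converse and well-definedness.} For the converse, I would reverse the construction: given an $(r,k)$-compatible pair, I would excise pairwise invariant disks around each matched pair of fixed points and around each matched pair of orbits, attach $r + \alpha$ one-handles joining corresponding boundary circles across the two sides, and extend the two finite-order actions over the handles using the angle-compatibilities. Over a handle joining a matched fixed-point pair, the combined rotation angle is $2\pi/n$, so the extension yields a mapping class whose $n$-th power is the Dehn twist about the corresponding curve; over an orbit block the extension first cyclically permutes the handles within the block, and a post-composition by a $(1/n)^{\text{th}}$-twist about a representative supplies the missing twist, just as in Theorem~\ref{thm:actions-nonseproots}. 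Independence of choices and conjugacy uniqueness follow by extending an equivalence of compatible pairs side-by-side across $\widehat{S_g(\C)}$ and then across the handles (the orbit-matching condition ensures the handle extensions can be chosen equivariantly), yielding a conjugating element $\Phi \in \Mod(S_g)$ as in the nonseparating case.

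\textbf{Main obstacle.} The principal subtlety, absent in Theorem~\ref{thm:actions-nonseproots}, is the mixed-order structure: rotation angles for $t_1$ and $t_2$ live in the distinct discrete subgroups $(2\pi/n_1)\mathbb{Z}$ and $(2\pi/n_2)\mathbb{Z}$ of $\mathbb{R}/2\pi\mathbb{Z}$, and one must carefully verify that the compatibility condition is exactly what is needed for their sums to populate the coarser subgroup $(2\pi/n)\mathbb{Z}$; it is precisely the identity $n = \lcm(n_1,n_2)$ that makes the $(1/n)^{\text{th}}$-twist correction well-defined and compatible with the cyclic permutation of curves within each orbit block. Verifying this bookkeeping in the converse construction, so that the many choices of handle-extensions within an orbit block assemble into a well-defined root of $t_{\C}$, is the step that will require the most care.
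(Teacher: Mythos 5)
Your proposal follows essentially the same route as the paper: restrict $h$ to the two components of $\widehat{S_g(\C)}$, apply Nielsen--Kerckhoff and cone to get the finite-order actions $t_1,t_2$, derive $n=\lcm(n_1,n_2)$ exactly as the paper does via Lemma~\ref{lem:powers_of_twists} and comparison of twist exponents, obtain the fixed-point angle compatibility from the Dehn-twist gluing relation and the orbit compatibility by passing to the power of $h$ stabilizing a representative curve, and reverse the construction (with the $(1/n)^{\text{th}}$-twist correction) for the converse and conjugacy-uniqueness, as in Theorem~\ref{thm:actions-nonseproots}. This matches the paper's argument in both structure and detail.
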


\begin{proof}
As before, we assume $m>1$ and first show that every $(r,k)$-permuting root $h$ of $t_{\C}$ of degree $n$ yields a compatible pair $(\l t_1 \r, \l t_2 \r)$ of degree $n$. Consider the $(r,k)$-partition $\C_{r,k}(h) = \{\C_1',\ldots \C_r',\C_1,\ldots, \C_k\}$ of $\C$ induced by $h$. Let $\widehat{S_{g_s}}$ for $s=1,2$ denote the two components of $\widehat{S_g(\C)}$. Let $N$ be a closed annular neighborhood of $\C$. By isotopy, we may assume that 
$t_{\C}(\C) = \C$, $t_{\C}(N) = N$ , and $t_{\C}\vert_{\widehat{S_{g_s}}} = id_{\widehat{S_{g_s}}}$. 
Putting $\widehat{t_s} = h\vert_{\widehat{S_{g_s}}}$, we may assume up to isotopy that 
$\widehat{t_s}^n\vert_{\widehat{S_{g_s}}} = \text{id}_{\widehat{S_{g_s}}}$ for $s=1,2$. 

Let $n_s$ be the smallest positive integer such that $\widehat{t_s}^{n_s}=\text{id}_{\widehat{S_{g_s}}}$ for $s=1,2$, and let $q=\lcm(n_1,n_2)$. Then $t:= h^q$ satisfies the hypotheses of Lemma \ref{lem:powers_of_twists}. Hence, $\exists d_c \in \mathbb{N}\cup\{0\}$ such that
$$
h^q = \prod_{c\in \C} t_c^{d_c}
$$
Since $h^n\lvert_{\widehat{S_{g_1}}} = \text{id}_{\widehat{S_{g_1}}}$ it follows that $n_1\mid n$, and similarly $n_2\mid n$. Hence, $q\mid n$ and so
$$
\prod_{c\in \C}t_c = t_{\C} = (h^q)^{n/q} = \prod_{c\in \C}t_c^{nd_c/q}
$$
Fix $c\in \C$ and restrict the functions on both sides of this equation to a closed annular neighbourhood of $c$ disjoint from other curves in $\C$. As in Remark \ref{rem:root_isotopy}, we see that $nd_c/q = 1$ and hence $n = q = \lcm(n_1,n_2)$. 

We fill in $\partial \widehat{S_{g_s}}$ with disks to obtain the closed oriented surfaces $S_{g_s}$ for $s=1,2$. We then extend $\widehat{t_s}$ to the $S_{g_s}$ by coning. Thus $t_s$ defines an effective $C_{n_s}$-action on the $S_{g_s}$, where $n_s \mid n$ for $s =1,2$, and $n = \lcm(n_1,n_2)$. 

When $r > 0$, the homeomorphism $t_s$ fixes the center points $\{P_{s,1},P_{s,2},\ldots, P_{s,r}\}$ of $r$ disks in $\overline{S_{g_s}\setminus \widehat{S_{g_s}}}$ for $s =1,2$. Hence we may write $\p(t_1,t_2) = \{P_{1,1},P_{1,2},\ldots, P_{1,r}\}$ and $\p(t_2,t_1) = \{P_{2,1},P_{2,2},\ldots, P_{2,r}\}$. For $1\leq j\leq r$, the proof of \cite[Theorem 3.4]{KR1} implies that the corresponding turning angles around $P_{1,j}$ and $P_{2,j}$ must be compatible in the sense of condition (ii) of Definition \ref{defn:mpair_permact}.

When $k > 0$, let $\C_i = \{c_{i,1},c_{1,2},\ldots, c_{i,m_i}\}$, for $1 \leq i \leq k$. Associated with each curve $c_{i,j} \in \C_i$, is a disk 
$D_{i,j}^s$ in each $\overline{S_{g_s}\setminus \widehat{S_{g_s}}}$ for $s = 1,2$. The centers $Q_{i,j}^s$ of the $m_i$ disks $D_{i,j}^s$ for $1 \leq j \leq m_i$ form an orbit $\o_{s,i}$ in $S_{g_s}$ for $s = 1,2$.  Thus we obtain a collection $\o_{t_s} = \{\o_{s,1}, \ldots , \o_{s,k}\}$ of $k$ distinguished nontrivial orbits on $S_{g_s}$ for $s = i,j$. It remains to show that $\o_{1,i} \sim \o_{2,i}$, for $1\leq i\leq k$, but the argument for this is similar to that of Theorem~\ref{thm:actions-nonseproots}. Hence, the pair $(\l t_1\r, \l t_2\r)$ forms an $(r,k)$-compatible pair of degree $n$.

The argument for the converse, and the fact that the resulting root is determined upto conjugacy, is analogous to that of Theorem~\ref{thm:actions-nonseproots}.
\end{proof}

\begin{defn}
\label{defn:doubpermdspair}
Permuting data sets $\widetilde{\D_i} := (\D_i,\o_{\D_i})$ for $i=1,2$, where 
$$\D_i = (n_i, \widetilde{g_i}, (a_{i,1},\ldots, a_{i,r_i}); ((c_{i,1},y_{i,1}) ,\ldots, (c_{i,s_i},y_{i,s_i})),$$ are said to be \textit{$(r,k)$-compatible of degree $n$} for integers $r \geq 0$ and $k \geq 0$, if 
\begin{enumerate}[(i)]
\item $n = \lcm(n_1,n_2)$,
\item $r  \leq r_i$, for $i=1,2$, and for $1 \leq j \leq r$,
$$a_{1,j}+a_{2,j} \equiv a_{1,j}a_{2,j} \pmod n,$$ 
\item there exists $S_i \subseteq \o_{\D_i}$ such that
\begin{enumerate}
\item 
$$
k = \displaystyle \sum_{(p,m_p) \in S_1} m_p = \sum_{(q,m_q) \in S_2} m_q,
$$
\item for every $(p,m_p) \in S_1$, there exists $(q,m_q) \in S_2$ such that
$$
\theta(p)+\theta(q) \equiv 
\begin{cases}
0 &: \text{ if } p = q = (0,1) \\
2\pi/n \pmod{2\pi} &: \text{ otherwise}
\end{cases}
$$

\end{enumerate}
\end{enumerate}
If $$\alpha := \sum_{((a,b),m_p)\in S_1} n_1/b = \sum_{((c,d),m_q) \in S_2} n_2/d$$ and
if $g_i$ is the genus of $\D_i$, then the number $g = g_1+g_2+r+\alpha-1$ is called \textit{genus} of the pair 
$(\widetilde{\D_1},\widetilde{\D_2})$.
\end{defn}

\noindent The next theorem and its corollary follow directly from Theorems~\ref{thm:action_triple_correspondence} and \ref{thm:homologous_root_action}.

\begin{thm}
\label{thm:mcomp_per_pairs-mcompdspairs}
Suppose that $S_g = S_{g_1}\#_{\C} S_{g_2}$, where $\C = {\C}^{(m)}$. Then $(r,k)$-compatible pairs $(\l t_1\r, \l t_2\r)$ of equivalence classes of permuting $(n_i,r_i,k_i)$-actions on the $S_{g_i}$ correspond to $(r,k)$-compatible pairs $(\widetilde{\D_1}, \widetilde{\D_2})$ of data sets of genus $g$, where $\widetilde{D_i}$ is a permuting $(n_i,r_i,k_i)$-data set of genus $g_i$.
\end{thm}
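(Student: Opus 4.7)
The plan is to deduce this theorem directly from Theorem~\ref{thm:action_triple_correspondence}, applied componentwise, by checking that the three compatibility conditions in Definition~\ref{defn:mpair_permact} translate term-by-term into those in Definition~\ref{defn:doubpermdspair}. Because Theorem~\ref{thm:action_triple_correspondence} already supplies a bijection $\l t_i\r \leftrightarrow \widetilde{\D_i}$ at the level of equivalence classes, the content of the theorem is a dictionary between the angle data used for actions and the residue-class data used for data sets.

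First, starting from an $(r,k)$-compatible pair $(\l t_1\r,\l t_2\r)$, I would invoke Theorem~\ref{thm:action_triple_correspondence} to produce $\widetilde{\D_i} = (\D_i,\o_{\D_i})$ for $i=1,2$, chosen so that the distinguished fixed points $\{P_{i,1},\dots,P_{i,r}\} \subset \p(t_i)$ yield the first $r$ entries $(a_{i,1},\dots,a_{i,r})$ of $\D_i$, and the distinguished orbits $\{\o_{i,1},\dots,\o_{i,k}\} \subset \o(t_i)$ determine a subset $S_i \subseteq \o_{\D_i}$ (counted with multiplicity). Condition (i) passes verbatim. For (ii), the proof of Theorem~\ref{thm:action_triple_correspondence} gives $\rho_i(\alpha_{i,j}) = t_i^{a_{i,j}}$, so by Notation~\ref{defn:orbit_distribution} the turning angle is $\theta_{P_{i,j}}(t_i) \equiv 2\pi a_{i,j}^{-1}/n_i \pmod{2\pi}$; substituting this into $\theta_{P_{1,j}}(t_1) + \theta_{P_{2,j}}(t_2) \equiv 2\pi/n \pmod{2\pi}$ and clearing denominators yields exactly $a_{1,j} + a_{2,j} \equiv a_{1,j}a_{2,j} \pmod n$. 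For (iii), each $\o_{i,j}$ contributes the pair $p(\o_{i,j})$ to $\o_{\D_i}$, and the relation $\o_{1,j} \sim \o_{2,j}$ of Definition~\ref{defn:orbit_equiv_sep} unpacks as equality of orbit sizes together with the angle sum $\equiv 2\pi/n \pmod{2\pi}$ in the nontrivial case; via the identity $|\o| = n_i/b$ whenever $p(\o) = (a,b)$, these translate exactly to (iii)(a) and (iii)(b). The same identity shows the two formulas for $\alpha$ agree, so the genus $g = g_1 + g_2 + r + \alpha - 1$ is common to both definitions.

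The converse direction is symmetric: starting from an $(r,k)$-compatible pair $(\widetilde{\D_1},\widetilde{\D_2})$ of data sets of genus $g$, Theorem~\ref{thm:action_triple_correspondence} produces representatives $t_1$ and $t_2$, and the dictionary above shows that Definition~\ref{defn:doubpermdspair}(i)--(iii) force Definition~\ref{defn:mpair_permact}(i)--(iii). I expect the main obstacle to be not conceptual but bookkeeping: one must apply the ``$a^{-1}$ versus $a$'' convention of Notation~\ref{defn:orbit_distribution} consistently at both fixed points and cone points, and must verify that the equivalence of data sets (which remembers only the multisets $\llbrace a_1,\ldots,a_r\rrbrace$ and $\llbrace (c_1,n_1),\ldots,(c_s,n_s)\rrbrace$) is compatible with the pairing that singles out the first $r$ entries $a_{i,j}$ and the subset $S_i \subseteq \o_{\D_i}$. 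This compatibility is automatic because we are free to re-order representatives within each equivalence class to align the distinguished data across the two components.
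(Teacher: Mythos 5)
Your proposal is correct and is essentially the paper's own argument: the paper offers no separate proof, stating only that the theorem ``follows directly'' from Theorem~\ref{thm:action_triple_correspondence} (together with Theorem~\ref{thm:homologous_root_action}), which is exactly the componentwise application of the action--data-set correspondence plus the angle/residue dictionary you spell out. Your explicit translation of conditions (i)--(iii) of Definition~\ref{defn:mpair_permact} into Definition~\ref{defn:doubpermdspair}, including the computation $a_{1,j}^{-1}+a_{2,j}^{-1}\equiv 1$ giving $a_{1,j}+a_{2,j}\equiv a_{1,j}a_{2,j}\pmod n$ and the identity $|\o|=n_i/b$ matching the two expressions for $\alpha$, simply makes the paper's implicit verification explicit.
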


\begin{cor}
Suppose that $S_g = S_{g_1}\#_{\C} S_{g_2}$, where $\C = {\C}^{(m)}$. Then conjugacy classes of $(r,k)$-permuting roots of $t_{\C}$ of degree $n$ correspond to $(r,k)$-compatible pairs $(\widetilde{\D_1},\widetilde{\D_2})$ of degree $n$ and genus $g$, where $\widetilde{D_i}$ is a permuting $(n_i,r_i,k_i)$ data set of genus $g_i$.
\end{cor}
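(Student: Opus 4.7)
The plan is to directly compose the two bijections already in hand. Theorem~\ref{thm:homologous_root_action} sets up a correspondence between conjugacy classes of $(r,k)$-permuting roots of $t_{\C}$ of degree $n$ and $(r,k)$-compatible pairs $(\l t_1 \r, \l t_2 \r)$ of equivalence classes of permuting actions on the two pieces $S_{g_1}$ and $S_{g_2}$. Theorem~\ref{thm:mcomp_per_pairs-mcompdspairs} in turn sets up a correspondence between such pairs of actions and $(r,k)$-compatible pairs $(\widetilde{\D_1}, \widetilde{\D_2})$ of permuting data sets of genus $g$. Composing these two bijections delivers the asserted correspondence, so the only task is to verify that the compatibility conditions transport faithfully from one side to the other.

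First I would treat each component in isolation. For $i=1,2$, Theorem~\ref{thm:action_triple_correspondence} converts $\l t_i \r$ into the equivalence class of a permuting $(n_i, r_i, k_i)$-data set $\widetilde{\D_i}$ of genus $g_i$, with the fixed-point angles of $t_i$ encoded in the $a_{i,\cdot}$ and the non-trivial orbit information encoded in $\o_{\D_i}$. Since this construction is purely local at each fixed point and each non-trivial orbit, the distinguished subsets $\p(t_1,t_2) \subset \p(t_1)$ and $\o(t_1,t_2) \subset \o(t_1)$ pick out distinguished entries on the $\D_1$ side, and symmetrically on the $\D_2$ side. Next I would check that the paired conditions of Definition~\ref{defn:mpair_permact} translate to those of Definition~\ref{defn:doubpermdspair}: the equality $n = \lcm(n_1,n_2)$ is identical, the turning-angle equation $\theta_{P_{1,j}}(t_1) + \theta_{P_{2,j}}(t_2) \equiv 2\pi/n \pmod{2\pi}$ is the standard reformulation of $a_{1,j}+a_{2,j} \equiv a_{1,j}a_{2,j} \pmod n$ as in \cite[Theorem~2.1]{MK1}, and the orbit equivalence $\o_{1,j} \sim \o_{2,j}$ from Definition~\ref{defn:orbit_equiv_sep} matches Definition~\ref{defn:doubpermdspair}(iii) once one reads off the pairs $p(\o) = (c, n_i/|\o|)$ from Notation~\ref{defn:orbit_distribution}(ii). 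The genus identity $g = g_1 + g_2 + r + \alpha - 1$ appears verbatim in both definitions, with $\alpha$ computed from orbit sizes $|\o_{i,j}| = n_i/b$.

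The main point requiring care is the fixed-point bookkeeping: the data-set entries $a_{i,j}$ live as residue classes modulo $n_i$, and one must ensure that they combine correctly modulo $n = \lcm(n_1,n_2)$ so that the additive compatibility on turning angles matches the multiplicative congruence recorded in Definition~\ref{defn:doubpermdspair}(ii). This is however already the standard translation used throughout \cite{MK1} and \cite{KR1}, so no new difficulty arises; once it is in place, the composition of the two bijections is formal, and the fact that conjugacy of roots on $S_g$ corresponds to simultaneous equivalence of both members of the data-set pair follows because each of the two constituent bijections respects the relevant equivalence relation.
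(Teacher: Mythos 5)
Your proposal is correct and is essentially the paper's own argument: the corollary is obtained by composing the correspondence of Theorem~\ref{thm:homologous_root_action} (roots of $t_{\C}$ versus $(r,k)$-compatible pairs of actions) with that of Theorem~\ref{thm:mcomp_per_pairs-mcompdspairs} (pairs of actions versus pairs of data sets via Theorem~\ref{thm:action_triple_correspondence}), checking that the angle and orbit compatibility conditions translate into the congruence and orbit-distribution conditions of Definition~\ref{defn:doubpermdspair}. Your added bookkeeping about residues modulo $n_i$ versus $n=\lcm(n_1,n_2)$ is the same standard translation the paper relies on from \cite{MK1} and \cite{KR1}, so no further verification is needed.
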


We now consider the case of a $(0,k')$-permuting root $h$ of $t_{\C}$ where $\C = \C^{(k)}(m)$. Here, the restriction of $h$ to $S_g(\C)$ induces a non-trivial permutation of the components of $S_g(\C)$. Since $h$ is a homeomorphism, it maps one component to another of the same genus, thus inducing an action on each subsurface of the form $\S_g(m) \subset S_g(\C)$. Therefore, we generalize the notion of a permuting $(n,r,k)$-action to encompass the action on $\S_g(m)$ induced by $h$.

\begin{defn} \label{defn:permuting_action_sgm}
Fix integers $g \geq 0$ and $m \geq 1$.
\begin{enumerate}[(i)]
\item A homeomorphism $\sigma_m : \S_g(m) \to \S_g(m)$ is said to be \textit{essential} if for all $i$, $\sigma_m\vert_{S_g^i} (S_g^i)= S_g^{i+1}$. 
\noindent An essential homeomorphism $\sigma_m$ on $\S_g(m)$ can be viewed simply as the $m$-cycle $(1,2,\ldots,m)$ permuting its $m$ components $S_g^i$. 
\item An orientation-preserving $C_n$-action $t$ on $\S_g(m)$ is said to be a \emph{permuting $(n,r,k)$-action} if $m \mid n$ and there exists an essential homeomorphism $\sigma_m : \S_g(m) \to \S_g(m)$ such that $t = \sigma_m \circ \widetilde{t}$, where $\widetilde{t}$ is a permuting $(n,r,k)$-action on each $S_g^i$. 
\end{enumerate}
\end{defn}

\begin{rem}\label{rem: permuting_action_sgm}
Suppose that $\widetilde{t} \in \Mod(S_g)$ defined a permuting $(n,r,k)$-action and $t = \sigma_m\circ \widetilde{t}$, then $t_i := t^m\lvert_{S_g^i} \in \Mod(S_g^i)$ defines a permuting $(n/m,\widetilde{r},\widetilde{k})$-action on $S_g^i$. Furthermore, all the $t_i$ are conjugate to each other via $\sigma_m$.

Conversely, if $t'\in \Mod(S_g)$ is a permuting $(n/m, \widetilde{r},\widetilde{k})$-action on $S_g$ which has an $m^{th}$ root $\widetilde{t} \in \Mod(S_g)$, then the map $t := \sigma_m\circ\widetilde{t}$ defines a permuting $(n,r,k)$-action on $\S_g(m)$. Thus, a permuting $(n,r,k)$-action on $\S_g(m)$ corresponds to a permuting $(n/m,\widetilde{r},\widetilde{k})$-action on $S_g$ which has an $m^{th}$ root in $\Mod(S_g)$.
\end{rem}

\begin{defn}
Let $t_1$ and $t_2$ be two permuting $(n,r,k)$-actions on $\S_g(m)$. 
Then we say $t_1$ is \emph{equivalent} to $t_2$ if ${t_1}^m\vert_{S_g^i}$ and ${t_2}^m\vert_{S_g^i}$ are equivalent as permuting $(n/m,\widetilde{r},\widetilde{k})$-actions on $S_g^i$ in the sense of Definition~\ref{defn:eq_perm_actions}. 
\end{defn}

%\begin{defn}
%Let $t_1$ be a permuting $(n_1,r_1,k_1$)-action on $S_{g_1}$ and $t_2$ be a permuting $(n_2,r_2,k_2)$-action on $\S_{g_2}(m)$ such that $t_2 = \sigma_m \circ \widetilde{t_2}$, where $\widetilde{t_2}$ is a permuting $(n_2,r_2,k_2)$-action on $S_{g_2}$. Let $\o_1 \in \o(t_1)$ and $\o_2 \in \o(\tilde{t_2})$, then we say that $\o_1$ and $\o_2$ are equivalent (denoted by $\o_1\sim \o_2$) if
%\begin{enumerate}[(i)]
%\item $|\o_1| = |\o_2|$, and
%\item if $n_1 \neq n_2$, then we further require that
%$$
%\theta_{\o_1}(t_1) + \theta_{\o_2}(\tilde{t_2}) = 2\pi/n \pmod{2\pi},
%$$
%where $n = \lcm(n_1,n_2)$.
%\end{enumerate}
%\end{defn}

\begin{defn}
\label{defn:uv_comp_pair}
Let $u,v\geq 0$ be fixed integers, $t_1$ be a permuting $(n_1,r_1,k_1)$-action on $S_{g_1}$ and $t_2$ be a permuting $(n_2,r_2,k_2)$-action on $\S_{g_2}(m)$ such that $t_2 = \sigma_m \circ \widetilde{t_2}$ as in Remark \ref{rem: permuting_action_sgm}. Then the equivalence classes $(\l t_1\r, \l t_2\r)$ are said to form a \textit{(u,v)-compatible pair} if 
\begin{enumerate}[(i)]
\item for $1 \leq i \leq m$, $(\l {t_1}^m \r, \l t_2^m\vert_{S_{g_2}^i} \r)$ is an $(u,v)$-compatible pair of degree $n/m$, where $n=\lcm(n_1,n_2)$, 
\item for $1\leq i\leq m, 1\leq j \leq u,$ there mutually disjoint pairs $\{P_{i,j}^1,P_{i,j}^2\}$, where $P_{i,j}^1 \in \p(t_1^m)$ and $P_{i,j}^2 \in \p(t_2^m\vert_{S_{g_2}^i})$ such that 
$$
\theta_{P_{i,j}^1}(t_1) + \theta_{P_{i,j}^2}(\widetilde{t_2}) = 2\pi/n \pmod{2\pi}, \text{ and } 
$$
\item for $1\leq i\leq m, 1\leq j \leq v,$ mutually disjoint pairs $\{\o_{i,j}^1, \o_{i,j}^2\}$, where $\o_{i,j}^1 \in \o(t_1)$ and $\o_{i,j}^2 \in \o(\widetilde{t_2})$, such that
$\o_{i,j}^1 \sim \o_{i,j}^2$, as in Definition~\ref{defn:orbit_equiv_sep}.
%\item for $1\leq i\leq m$, the sets $$\p(t_1^m,t_2^m\lvert_{S_{g_2}^i}) \sqcup \left(\sqcup_{\o \in \o(t_1^m, t_2^m\lvert_{S_{g_2}^i})} \o \right)$$ are mutually disjoint.
\end{enumerate} 
The number $n = \lcm(n_1,n_2)$ is called the \textit{degree} of the pair, and the number $g = g_1 + m(g_2+k-1)$ is called the \textit{genus} of the pair. 
\end{defn}

\begin{notation}
Let $(\l t_1\r, \l t_2\r)$ be a $(u,v)$-compatible pair of degree $n$ as in Definition \ref{defn:uv_comp_pair}. We write 
\begin{enumerate}[(i)]
\item $\p(t_1, t_2) = \{P_{i,j}^1 : 1\leq i\leq m, 1\leq j\leq u\}$ and $\p(t_2,t_1) = \{P_{i,j}^2 : 1\leq i\leq m, 1\leq j\leq u\}$.
\item Similarly, we define $\o(t_1,t_2) = \{\o_{i,j}^1 : 1\leq i\leq m, 1\leq j\leq v\}$ and $\o(t_1,t_2) = \{\o_{i,j}^2 : 1\leq i\leq m, 1\leq j\leq v\}$.
\end{enumerate}
\end{notation}

\begin{lemma}\label{lem:single_orbit_actions}
Suppose that $\C = \C^{(k)}(m)$ be a multicurve in $S_g$. Then conjugacy classes of $(0,k')$-permuting roots of $t_{\C}$ of degree $n$ correspond to $(u,v)$-compatible pairs of degree $n$ and genus $g$, where $k'=u+v$.
\end{lemma}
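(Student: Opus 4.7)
The plan is to extend the strategy of Theorem \ref{thm:homologous_root_action} to the case where $\widehat{S_g(\C)}$ has $m+1$ components: a unique copy of $\widehat{S_{g_1}}$ (coming from the decomposition $S_g = S_{g_1} \#_{\C} \S_{g_2}(m)$) and $m$ copies $\widehat{S_{g_2}^i}$ for $1 \leq i \leq m$. The essential new feature is that a $(0,k')$-permuting root $h$ must cyclically permute the $m$ copies, forcing the induced action on $\S_{g_2}(m)$ to be of the form $\sigma_m \circ \widetilde{t_2}$ for an essential homeomorphism $\sigma_m$, as in Definition \ref{defn:permuting_action_sgm}.

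For the forward direction, given a $(0,k')$-permuting root $h$ of $t_\C$ of degree $n$, use Remark \ref{rem:root_isotopy} to arrange $h(\C) = \C$ and $h(N) = N$ for a closed annular neighbourhood $N$ of $\C$. Since $\widehat{S_{g_1}}$ is the unique component of its genus in $\widehat{S_g(\C)}$, it is preserved by $h$, which induces a permutation $\pi$ of the $m$ copies $\widehat{S_{g_2}^i}$; as the induced partition of $\C$ has no singletons, $\pi$ has no fixed components, and one verifies that $\pi$ must be an $m$-cycle. Applying the Nielsen-Kerckhoff theorem to $h|_{\widehat{S_{g_1}}}$ and to $h^m|_{\widehat{S_{g_2}^1}}$, and then propagating by conjugation with powers of $h$ to the remaining copies, one isotopes $h$ so that after coning it produces a finite-order $t_1 \in \Mod(S_{g_1})$ and $t_2 = \sigma_m \circ \widetilde{t_2} \in \Mod(\S_{g_2}(m))$. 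The identity $n = \lcm(n_1, n_2)$ then follows verbatim from the argument in Theorem \ref{thm:homologous_root_action}: $h^{\lcm(n_1,n_2)}$ is supported on $N$, hence is a product of Dehn twists by Lemma \ref{lem:powers_of_twists}, and comparing with $h^n = t_\C$ on a tubular neighbourhood of a single curve forces $n = \lcm(n_1,n_2)$.

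To verify conditions (ii) and (iii) of Definition \ref{defn:uv_comp_pair}, one classifies the $h$-orbits on $\C$ by length. An orbit of length exactly $m$ corresponds to a curve whose associated disks in $\overline{S_{g_1} \setminus \widehat{S_{g_1}}}$ and in each $\overline{S_{g_2}^i \setminus \widehat{S_{g_2}^i}}$ are fixed by $t_1^m$ and by $t_2^m|_{S_{g_2}^i}$ respectively; the angle compatibility at each pair $\{P_{i,j}^1, P_{i,j}^2\}$ then follows by applying \cite[Theorem 2.1]{MK1} to $h^m$, as in the proof of Theorem \ref{thm:actions-nonseproots}. An orbit of length $ms$ with $s > 1$ corresponds to a nontrivial orbit of $t_2^m|_{S_{g_2}^i}$ of size $s$ paired with one of $t_1^m$, and the equivalence $\o_{i,j}^1 \sim \o_{i,j}^2$ follows from the same theorem applied to $h^{ms}$. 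Letting $u$ and $v$ denote the numbers of orbits of the two types gives $k' = u + v$ and yields the required $(u,v)$-compatible pair of degree $n$ and genus $g$.

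The reverse direction reverses this construction: given a $(u,v)$-compatible pair, puncture $S_{g_1}$ and each $S_{g_2}^i$ at the specified fixed points and orbits, reattach the $mk$ annuli to rebuild $S_g$, and extend $t_1$ and $\sigma_m \circ \widetilde{t_2}$ across these annuli with $1/n$-twists to correct the angle discrepancy on each nontrivial orbit pair, producing a $(0,k')$-permuting root of $t_\C$. Conjugacy well-definedness is checked as in Theorem \ref{thm:homologous_root_action}, with the additional observation that any equivalence of compatible pairs intertwines $\sigma_m$ and hence extends across $N$ to a conjugating element of $\Mod(S_g)$. The main obstacle is the orbit-classification step: each $h$-orbit on $\C$ must be decomposed correctly as an $m$-fold cycling of components combined with an internal $s$-cycle on each $S_{g_2}^i$, uniformly for $i = 1, \ldots, m$, and the angle/orbit compatibility must be verified at the correct power of $h$ so that the pair structure of Definition \ref{defn:uv_comp_pair} is realised simultaneously across all $m$ copies.
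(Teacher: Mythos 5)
Your proposal follows essentially the same route as the paper's proof: decompose $S_g$ along $\C$, apply Nielsen--Kerckhoff and coning to get $t_1$ on $S_{g_1}$ and $t_2=\sigma_m\circ\widetilde{t_2}$ on $\S_{g_2}(m)$ with $n=\lcm(n_1,n_2)$ via Lemma~\ref{lem:powers_of_twists}, then use the fact that $h^m$ is a degree-$(n/m)$ root preserving each $\C^{(k)}_i$ to obtain the compatibility conditions of Definition~\ref{defn:uv_comp_pair} (upgrading the angle sums from $2\pi/(n/m)$ to $2\pi/n$), and reverse the construction for the converse. The only caveats are cosmetic: the central component is distinguished by its $km$ boundary circles rather than by its genus, and the induced permutation of the $m$ copies being an $m$-cycle is really part of the intended ``single orbit'' hypothesis (otherwise one decomposes $\C^{(k)}(m)$ further and uses compatible tuples), a point the paper's own proof also leaves implicit.
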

\begin{proof}
Let $h\in\Mod(S_g)$ be a $(0,k')$-permuting root of degree $n$ on $S_g$. Then as in Theorem \ref{thm:homologous_root_action}, we obtain an effective $C_{n_1}$-action $t_1$ on $S_{g_1}$ and an effective $C_{n_2}$-action on $S_{g_2}(m)$, where $n=\lcm(n_1,n_2)$. Furthermore, $h$ restricts to an essential homeomorphism $\sigma_m : \S_g(m) \to \S_g(m)$ such that $\sigma_m\circ t_2 = t_2\circ \sigma_m$. Hence the maps
$$
t_{2,i}:= \sigma_m^{-1}t\lvert_{S_{g_2}^i} : S_{g_2}^i \to S_{g_2}^i
$$
are conjugate to each other when considered as elements of $\Mod(S_g)$, and so $t_2$ is a permuting $(n_2,r_2,k_2)$-action on $S_{g_2}(m)$ as in Definition \ref{defn:permuting_action_sgm}. Since $h^m$ is a root of $t_{\C}$ of degree $(n/m)$ that preserves each $\C^{(k)}_i$, we may restrict $h^m$ and cone to obtain maps
$$
h_i \in \Mod(\Sigma_i) \text{, where } \Sigma_i := S_{g_1}\#_{\C_i^{(k)}}S_{g_2}^i,
$$
which are pairwise conjugate to each other via $h$. Thus, it follows from Theorem \ref{thm:homologous_root_action} that there exist integers $u,v\geq 0$ such that $(\l t_1^m\r, \l t_2^m\lvert_{S_{g_2}^i}\r)$ forms a $(u,v)$-compatible pair of degree $n/m$, for $1\leq i\leq m$. 

By condition (ii) of Definition \ref{defn:mpair_permact}, there exists mutually disjoint pairs $\{P_{i,j}^1, P_{i,j}^2\}$, where $P_{i,j}^1 \in \p(t_1^m)$ and $P_{i,j}^2 \in \p(t_2^m\vert_{S_{g_2}^i})$, such that 
$$
\theta_{P_{i,j}^1}(t_1^m) + \theta_{P_{i,j}^2}(\widetilde{t_2}^m) = \frac{2\pi}{(n/m)} \pmod{2\pi}.
$$
Once again, since $h^m$ is a root of $t_{\C}$ it follows that
$$
\theta_{P_{i,j}^1}(t_1) + \theta_{P_{i,j}^2}(\widetilde{t_2}) = 2\pi/n \pmod{2\pi}.
$$
Similarly, one obtains condition (iii) of Definition \ref{defn:uv_comp_pair} as well. Note that every fixed point of $h^m$, and every orbit of $h^m$, induces one orbit of $h$, and thus $k'=u+v$. The converse is a just a matter of reversing this argument. 
\end{proof}

We now consider the case of an $(r,k)$-permuting root of $t_{\C}$ where $r,k>0$. We begin by writing $S_g$ as a connected sum of subsurfaces $S_{g_i}$ across those pseudo-nonseparating submulticurves which are preserved by $h$. The restriction of $h$ to each $S_{g_i}$ is then a $(0,k_i)$-permuting root of the Dehn twist about the  submulticurve $\C\cap S_{g_i}$. This allows us to apply Theorem \ref{lem:single_orbit_actions} to obtain finite order actions on $S_{g_i}$ such that pairs of actions on adjacent subsurfaces are compatible (in the sense of Theorem \ref{thm:homologous_root_action}).

\begin{notation}\label{not:rk_permuting_root} 
Let $\C$ be a separating multicurve in $S_g$, and let $h$ be an $(r,k)$-permuting root of $t_{\C}$. 
\begin{enumerate}[(i)]
\item We shall denote the set of all pseudo-nonseparating submulticurves of $\C$ that are preserved by $h$ by $\Fix_h(\C) = \{\E_1,\ldots,\E_{m(h)}\}$. %We shall write $\E_{r_i',k_i'}(h):= \C_{r,k}(h) \cap \E_i$.
\item Writing
$$
S_g = \csum_{i=1}^{m(h)}  (S_{g_i} \#_{\E_i} S_{g_{i+1}})
$$
and $D_i := \C \cap S_{g_i}$, we have that for each $i$, $S_{g_i} \cap \C_{r,k}(h)$ is a $(0,k_i)$-partition of $D_i$, which has the form
$$
S_{g_i} \cap \C_{r,k}(h) = \{ \C^{(k_{i,j})}_{i,j}(m_{i,j})\,:\, 1 \leq j \leq k_i\}.$$
\item For $1 \leq i \leq r+1$, we write
$$
S_{g_i} = \csum_{j=1}^{k_i} (S_{g_{i,1}} \#_{D_{i,j}} \S_{g_{i,2,j}}(m_{i,j})),
$$
where $D_{i,j} = \C_{i,j}^{(k_{i,j})}(m_{i,j})$ and $g_i = g_{i,1} + \sum_{j=1}^{k_i} m_{i,j} (g_{i,2,j} + k_{i,j} - 1)$. 
\item We will denote $\displaystyle \min_{1 \leq i \leq m(h)} g_i+g_{i+1}$ by $g(\C)$.
\end{enumerate}
\end{notation}
\noindent In Figure~\ref{fig:split-surface} below, we have the surface $S_{24}$ with the multicurve $$\C = \C^{(2)}(2) \sqcup \C^{(1)}(2) \sqcup \C^{(3)} \sqcup \C^{(1)} \sqcup \C^{(1)}(3).$$ According to  Notation~\ref{not:rk_permuting_root}, 
$$S_{24} = \left(\S_3(2) \#_{\C^{(2)}(2)} S_2 \#_{\C^{(1)}(2)} \S_1(2)\right)  \#_{\C^{(3)}} S_2 \#_{\C^{(1)}} \left(S_1 \#_{\C^{(1)}(3)}\S_3(3)\right).$$
\begin{figure}[h]
\labellist
\tiny
\pinlabel $\C^{(2)}(2)$ at 70 95
\pinlabel $\C^{(1)}(2)$ at 225 145
\pinlabel $\C^{(3)}$ at 333 40
\pinlabel $\C^{(1)}$ at 545 145
\pinlabel $\C^{(1)}(3)$ at 620 40
\endlabellist
\centering
\includegraphics[width = 70 ex]{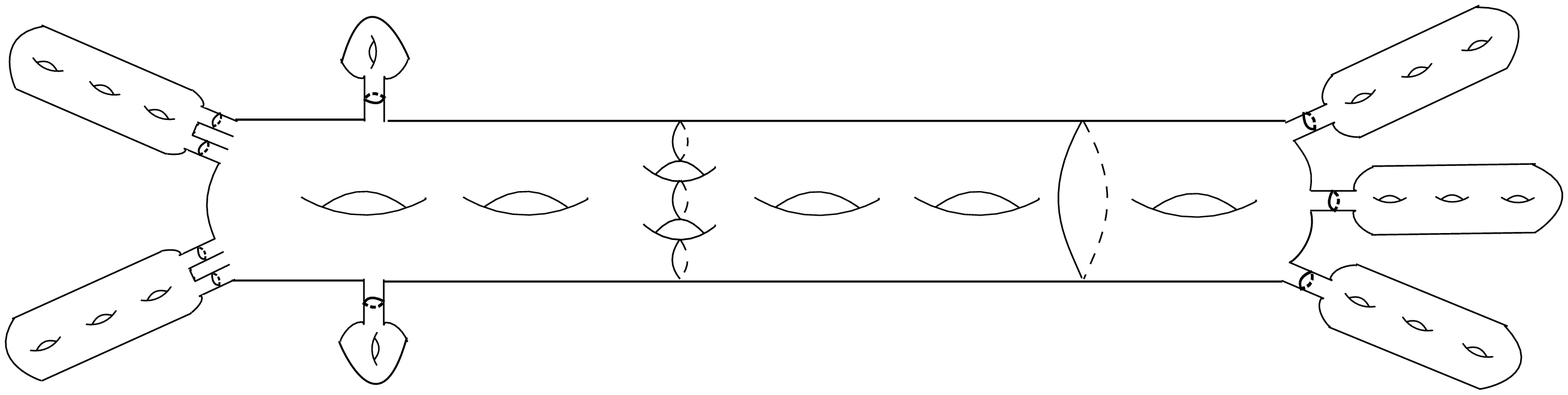}
\caption{$S_{24}$ with a separating multicurve $\C$.}
\label{fig:split-surface}
\end{figure}

\begin{defn}
\label{defn:separating_actions_rk}
 Let $t_1$ be a permuting $(n_1,0,k_1)$-action on $S_{g_1}$, and let $t_{2,j}$ be a permuting $(n_{2,j},0,k_{2,j})$-action on $\S_{g_{2,j}}(m_j)$, for $1 \leq j \leq s$. Then  $(\l t_1\r ,\l t_{2,1}\r ,\ldots,\l t_{2,s}\r )$ forms a \textit{(s+1)-compatible tuple  of degree n} if 
\begin{enumerate}[(i)]
\item for each $1\leq j\leq s$, $(\l t_1\r ,\l t_{2,j}\r )$ forms an $(u_j,v_j)$-compatible pair of degree $n$, for some $u_j, v_j \geq 0$ such that $k_{2,j} = u_j+v_j$.
\item For each $i\neq j$,
$$
\o(t_1,t_{2,i})\cap\o(t_1,t_{2,j}) = \emptyset = \p(t_1,t_{2,i})\cap\p(t_1,t_{2,j}).
$$

\end{enumerate}
The number $g = g_1+ \sum_{j=1}^s m_j(g_{2,j}+ k_{2,j} -1)$ is called the \textit{genus} of the $(s+1)$-tuple. 
\end{defn}

%\noindent The following lemma follows from Lemma~\ref{lem:single_orbit_actions}. 

\begin{defn}\label{defn:separating_multituples}
Fix $m,n \in \mathbb{N}$, and for $1 \leq i \leq m+1$, let 
$$\bar{t_i} = (\l t_{i,1}\r ,\l t_{i,2,1}\r ,\ldots,\l t_{i,2,s_i}\r )$$ be a $(s_i+1)$-compatible tuple as in Definition \ref{defn:separating_actions_rk}. Then the tuple
$$
( \overline{t_1} , \overline{t_2}, \ldots, \overline{t_{m+1}} )
$$
is said to form an \textit{(m+1)-compatible multituple of degree n} if for each $1\leq i\leq (m+1)$, 
\begin{enumerate}[(i)]
\item the pair $(\l t_{i,1}\r ,\l t_{i+1,1}\r )$ forms an $(r_{i,1},k_{i,1})$-compatible pair of degree $n$, and
\item
$
\o(t_{i,1},t_{i+1,1})\cap \left( \sqcup_{j=1}^{s_i} \o(t_{i,1},t_{i,2,j})\right) = \emptyset = \p(t_{i,1},t_{i+1,1})\cap \left( \sqcup_{j=1}^{s_i} \p(t_{i,1},t_{i,2,j})\right).
$
%\item $|\o_{t_{1,i}}| = \sum_{j=1}^{k_i}|\o_{t_{2,i,j}}| + |\o(t_{1,i},t_{1,i+1})|$.
\end{enumerate}
If $g(\overline{t_i})$ denotes the genus of $\overline{t_i}$, and
$$
\alpha_i := \sum_{\o \in\o(t_{1,i},t_{1,i+1})} |\o|
$$
then the number 
$$
g = \sum_{i=1}^{m+1} g(\overline{t_i}) + \sum_{i=1}^m (r_{1,i} + k_{1,i}\alpha_i-1)
$$
is called the \textit{genus} of the multituple. 
\end{defn}

\noindent The following theorem follows from Lemmas~\ref{thm:homologous_root_action} and  \ref{lem:single_orbit_actions}.

\begin{thm}\label{thm: compatible_multituple}
Let $\C$ be separating multicurve in $S_g$. Then the conjugacy class of a root $h$ of $t_{\C}$ of degree $n$ with $|\Fix_h(\C)| = m$ corresponds to an $(m+1)$-compatible  multituple of degree $n$ and genus $g$. 
\end{thm}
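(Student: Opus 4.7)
The plan is to bootstrap from the two building blocks already proved: Lemma~\ref{lem:single_orbit_actions} gives us the structure of a $(0,k_i)$-permuting root of the Dehn twist about $\C\cap S_{g_i}$ on a single piece of the form $S_{g_{i,1}}\#_{D_{i,j}}\S_{g_{i,2,j}}(m_{i,j})$, while Theorem~\ref{thm:homologous_root_action} controls the gluing across a preserved pseudo-nonseparating submulticurve. The goal is to organize these into an $(m+1)$-compatible multituple and back.

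\textbf{Forward direction.} Given an $(r,k)$-permuting root $h$ of $t_{\C}$ with $\Fix_h(\C)=\{\E_1,\ldots,\E_m\}$, I would begin with the decomposition from Notation~\ref{not:rk_permuting_root}, writing $S_g = \csum_{i=1}^{m}(S_{g_i}\#_{\E_i}S_{g_{i+1}})$. Since each $\E_i$ is a pseudo-nonseparating submulticurve \emph{preserved} by $h$, an isotopy as in Remark~\ref{rem:root_isotopy} lets us assume $h(\E_i)=\E_i$ and $h$ preserves an annular neighbourhood of $\E_i$; thus $h$ restricts to a homeomorphism of each $\widehat{S_{g_i}(\E_i\cup\E_{i-1})}$. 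Capping off yields a $(0,k_i)$-permuting root $h_i$ of $t_{D_i}$ on $S_{g_i}$ (where $D_i=\C\cap S_{g_i}$), since none of the remaining submulticurves of $D_i$ are preserved by $h$ (those would have been in $\Fix_h(\C)$). Applying Lemma~\ref{lem:single_orbit_actions} to each $h_i$ produces a $(s_i+1)$-compatible tuple $\overline{t_i}=(\l t_{i,1}\r,\l t_{i,2,1}\r,\ldots,\l t_{i,2,s_i}\r)$ of degree $n$ whose genus is $g_i$.

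\textbf{Compatibility across preserved submulticurves.} For each $1\le i\le m$, the restriction of $h$ to a neighbourhood of $\E_i$ together with the coned actions $t_{i,1}$ on $S_{g_i}$ and $t_{i+1,1}$ on $S_{g_{i+1}}$ is exactly the situation of Theorem~\ref{thm:homologous_root_action} applied to the connected sum $S_{g_i}\#_{\E_i}S_{g_{i+1}}$. That theorem immediately gives condition (i) of Definition~\ref{defn:separating_multituples}: $(\l t_{i,1}\r,\l t_{i+1,1}\r)$ is an $(r_{i,1},k_{i,1})$-compatible pair of degree $n$, where $r_{i,1}+k_{i,1}$ counts the cardinalities of fixed singletons and orbits inside $\E_i$. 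The disjointness condition (ii) of Definitions~\ref{defn:separating_actions_rk} and~\ref{defn:separating_multituples} is automatic: the capped disks associated with the curves of $\E_i$ are geometrically distinct from those coming from the submulticurves $D_{i,j}\subset S_{g_i}$, because in $S_g$ the curves in different submulticurves of $\C$ are disjoint and non-isotopic, so their replacement disks in $S_{g_i}$ have disjoint centres and hence lie in disjoint orbits/fixed-point sets of $t_{i,1}$. The genus formula is a bookkeeping check: adding up $g(\overline{t_i})$ accounts for each piece, and the correction $\sum_i(r_{1,i}+k_{1,i}\alpha_i-1)$ records the handles restored when re-gluing along the preserved submulticurves $\E_i$, matching the genus contribution computed in Definition~\ref{defn:mpair_permact} applied at each $\E_i$.

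\textbf{Reverse direction and conjugacy.} Given an $(m+1)$-compatible multituple, I would invert the construction in stages. First use Lemma~\ref{lem:single_orbit_actions} on each $\overline{t_i}$ to build a $(0,k_i)$-permuting root $h_i$ of $t_{D_i}$ on $S_{g_i}$; the compatibility within $\overline{t_i}$ ensures this is well-defined up to conjugacy. Then, using condition (i) of Definition~\ref{defn:separating_multituples} and Theorem~\ref{thm:homologous_root_action}, glue $h_i$ to $h_{i+1}$ across $\E_i$ by removing invariant disks around the matched fixed points and orbits and attaching $1$-handles with appropriate fractional twists; the disjointness condition (ii) guarantees these handles do not interfere with the handles added inside each $S_{g_i}$. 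Iterating produces an $(r,k)$-permuting root $h$ of $t_{\C}$ of degree $n$ on $S_g$. That $h$ is determined up to conjugacy follows by the same argument as in Theorem~\ref{thm:homologous_root_action} and Lemma~\ref{lem:single_orbit_actions}: an equivalence of multituples yields equivariant homeomorphisms on each piece, which patch along the preserved submulticurves to a global conjugating homeomorphism.

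The main obstacle I anticipate is purely organizational rather than conceptual. Two-sided compatibility at each $\E_i$ interacts with the in-piece compatibility controlling the attached $\S_{g_{i,2,j}}(m_{i,j})$, and one must track that the degree $n=\lcm$ computed piecewise is the same as the global $n$ and that no orbit or fixed point is double-counted when an interior piece $S_{g_i}$ borders two preserved submulticurves $\E_{i-1}$ and $\E_i$. The disjointness conditions in Definitions~\ref{defn:separating_actions_rk}(ii) and~\ref{defn:separating_multituples}(ii) are precisely what rule this out, so the verification reduces to checking that the centres of the disks used to build the various handles lie in pairwise disjoint orbits of $t_{i,1}$—an inspection which follows from the disjointness of the curves of $\C$ in $S_g$.
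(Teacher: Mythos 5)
Your proposal is correct and follows essentially the same route as the paper, which in fact offers no detailed argument at all: it simply states that the theorem follows from Theorem~\ref{thm:homologous_root_action} and Lemma~\ref{lem:single_orbit_actions}, exactly the two ingredients you assemble. Your write-up merely makes explicit the cutting along the preserved submulticurves $\E_i$, the application of Lemma~\ref{lem:single_orbit_actions} on each piece, the compatibility and disjointness checks, and the reverse gluing, all of which is the intended (implicit) proof.
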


\noindent The following corollary, which gives an upper bound for the degree of a root of $t_{\C}$, follows from \cite[Theorem 8.6]{KR1} and Theorem~\ref{thm: compatible_multituple}.

\begin{cor}
\label{cor:bound_sep}
Let $\C$ be a separating multicurve in $S_g$ and $h$ be a root of $t_{\C}$ of degree $n$. Then $n \leq 4{g(\C)}^2 + 2g(\C)$.
\end{cor}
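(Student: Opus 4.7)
The plan is to reduce, via Theorem~\ref{thm: compatible_multituple}, to a single pair of compatible permuting actions at a ``weakest link'' of the chain decomposition of $S_g$ induced by $h$, and then invoke the quadratic bound from \cite[Theorem~8.6]{KR1}.

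First, I would invoke Theorem~\ref{thm: compatible_multituple} to associate to the conjugacy class of $h$ an $(m(h)+1)$-compatible multituple $(\overline{t_1}, \ldots, \overline{t_{m(h)+1}})$ of degree $n$ and genus $g$. Next, I would choose an index $i^* \in \{1, \ldots, m(h)\}$ for which the minimum $g(\C) = g_{i^*} + g_{i^*+1}$ is attained. By condition (i) of Definition~\ref{defn:separating_multituples}, this yields an $(r_{i^*,1}, k_{i^*,1})$-compatible pair $(\l t_{i^*,1}\r, \l t_{i^*+1,1}\r)$ of degree $n$ of permuting actions on the ``central'' surfaces $S_{g_{i^*,1}}$ and $S_{g_{i^*+1,1}}$ adjacent to $\E_{i^*}$.

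By Theorem~\ref{thm:homologous_root_action}, this compatible pair corresponds to a root of $t_{\E_{i^*}}$ of degree $n$ in $\Mod(S_{g_{i^*,1}} \#_{\E_{i^*}} S_{g_{i^*+1,1}})$. I would then invoke \cite[Theorem~8.6]{KR1}, which gives a quadratic upper bound of the form $n \le 4{g'}^2 + 2g'$ for a root of a Dehn twist about a single separating curve, with $g'$ being the sum of the genera of the two resulting components. Granting that this bound extends from single separating curves to pseudo-nonseparating submulticurves via the common permuting data set formalism, I would conclude $n \le 4(g_{i^*,1} + g_{i^*+1,1})^2 + 2(g_{i^*,1} + g_{i^*+1,1})$. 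Finally, since $g_{i,1} \le g_i$ by the genus formula in Notation~\ref{not:rk_permuting_root}(iii), the desired bound $n \le 4g(\C)^2 + 2g(\C)$ follows.

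The main obstacle I expect is justifying the extension of \cite[Theorem~8.6]{KR1} from its original single separating curve setting to the pseudo-nonseparating submulticurve $\E_{i^*}$ here. This should nonetheless be essentially automatic: the bound in \cite{KR1} is a Riemann--Hurwitz-style consequence of the compatibility constraints on the two permuting actions in the compatible pair, and the constraints codified in Definitions~\ref{defn:mpair_permact} and \ref{defn:doubpermdspair} take the same form regardless of whether the gluing submulticurve consists of a single curve or of several pseudo-nonseparating curves. Consequently, the calculation that produced the bound $4{g'}^2+2g'$ in \cite{KR1} should transcribe verbatim to the pair $(\l t_{i^*,1}\r,\l t_{i^*+1,1}\r)$ here, yielding the corollary.
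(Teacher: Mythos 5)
Your proposal takes essentially the same route as the paper, whose entire proof is the observation that the bound follows from Theorem~\ref{thm: compatible_multituple} together with \cite[Theorem 8.6]{KR1}: you extract the compatible pair of degree $n$ at the index realizing $g(\C)$, pass through Theorem~\ref{thm:homologous_root_action}, apply the quadratic bound of \cite{KR1}, and use $g_{i,1}\leq g_i$, all of which are harmless elaborations of that citation. The one step you flag as needing care --- transporting \cite[Theorem 8.6]{KR1} from a single separating curve to a pseudo-nonseparating gluing submulticurve --- is likewise left implicit in the paper, so your write-up is as complete as the paper's own argument.
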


\noindent The following corollary, which gives a stable upper bound for the degree of a root of $t_{\C}$, follows from \cite[Theorem 8.14]{KR1} and Theorem~\ref{thm: compatible_multituple}. 

\begin{cor}   
\label{cor:stablebound_sep}
Let $\C$ be a separating multicurve in $S_g$ and $h$ be a root of $t_{\C}$ of degree $n$.  Then $n \leq \frac{16}{5}g^2 + 12g + \frac{45}{4}$, 
if $g(\C) \geq 10$. 
\end{cor}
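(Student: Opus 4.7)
The plan is to reduce to a single compatible-pair estimate and apply \cite[Theorem 8.14]{KR1}, in direct parallel with the derivation of Corollary~\ref{cor:bound_sep} from \cite[Theorem 8.6]{KR1}. I will first invoke Theorem~\ref{thm: compatible_multituple} to express the conjugacy class of $h$ as an $(m+1)$-compatible multituple of degree $n$ and genus $g$, where $m=|\Fix_h(\C)|$ and $\Fix_h(\C)=\{\E_1,\ldots,\E_m\}$. I will then choose an index $i_0 \in \{1,\ldots,m\}$ that realizes the minimum in $g(\C)$, so that $g_{i_0}+g_{i_0+1}=g(\C)\geq 10$ by hypothesis.

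The sub-pair $(\l t_{i_0,1}\r,\l t_{i_0+1,1}\r)$ forms an $(r_{i_0,1},k_{i_0,1})$-compatible pair of degree $n$ across the pseudo-nonseparating submulticurve $\E_{i_0}$. By Theorem~\ref{thm:homologous_root_action}, this pair corresponds to the conjugacy class of a root of $t_{\E_{i_0}}$ of degree $n$ in the surface $S_{g_{i_0}}\#_{\E_{i_0}}S_{g_{i_0+1}}$, which has genus $\widetilde{g} := g_{i_0}+g_{i_0+1}+|\E_{i_0}|-1 \leq g$. With $g(\C)\geq 10$, this reduced root lies in the stable range of \cite[Theorem 8.14]{KR1}, yielding $n \leq \tfrac{16}{5}\widetilde{g}^{\,2} + 12\widetilde{g} + \tfrac{45}{4}$. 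Since this expression is monotonically increasing in $\widetilde{g}$ and $\widetilde{g}\leq g$, the desired bound will follow at once.

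The step I expect to require the most care is the justification of applying \cite[Theorem 8.14]{KR1} in the situation where $\E_{i_0}$ may consist of more than one component, whereas \cite[Theorem 8.14]{KR1} is formulated for a single separating curve. The resolution is that the estimate in \cite{KR1} rests on Riemann-Hurwitz applied to the cyclic actions on the two components of the complement, which depends only on their genera $g_{i_0}$ and $g_{i_0+1}$ together with the orbit and fixed-point compatibility data encoded in Definition~\ref{defn:mpair_permact}; the cardinality $|\E_{i_0}|$ enters only through its contribution to $\widetilde{g}$ via the connect-sum construction. Thus the estimate transfers to our setting without modification, and the corollary will follow.
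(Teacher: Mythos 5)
Your argument is essentially the paper's own (one-line) proof: the paper also derives the bound by extracting from the $(m+1)$-compatible multituple of Theorem~\ref{thm: compatible_multituple} a compatible pair of degree $n$ across a preserved pseudo-nonseparating submulticurve and then invoking the stable bound of \cite[Theorem 8.14]{KR1}, exactly as you do, with your detour through Theorem~\ref{thm:homologous_root_action} to realize an honest root of $t_{\E_{i_0}}$ on a surface of genus $\widetilde{g}\leq g$ and the final monotonicity step being harmless refinements. The one point left implicit in both treatments is that \cite[Theorem 8.14]{KR1} is stated for a single separating curve with fixed-point compatibility, so its transfer to pseudo-nonseparating multicurves and orbit compatibility is asserted rather than re-proved; your closing paragraph flags this, which is as much as the paper itself offers.
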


The primary purpose behind developing the notion of data sets was for algebraically describing actions that are involved in the construction of roots. Owing to complexity of notation, we shall abstain from developing separate notation involving data sets for encoding compatible tuples or multituples. We will see later in Sections \ref{sec:classify_genus3} and \ref{sec:classify_genus4} that one can easily circumvent the need for complex notation by a judicious use of Remark \ref{rem: permuting_action_sgm}.

Recall that a multicurve $\C$ is said to be mixed if it is neither separating nor nonseparating. For the sake of brevity, we do not develop a separate theory to describe roots of Dehn twists about mixed multicurves as such a theory would merely be a combination of the separating and nonseparating cases. However, we will classify such roots on $S_3$ and $S_4$, thereby indicating how such a theory would  follow from the ideas developed in Sections \ref{sec:nonsepmulcurves} and \ref{sec:sepmulcurves}.

\section{Classification of roots for $S_3$}
\label{sec:classify_genus3}

In this, and the subsequent section, we classify roots of Dehn twists about multicurves on surfaces of genus 3 and 4. Once again, we break up the classification into nonseparating, separating and mixed multicurves.

When classifying an $(m+1)$-compatible multituples $(\overline{t_1},\overline{t_2},\ldots, \overline{t_{m+1}})$ that corresponds to a root, Condition (i) of Definition \ref{defn:separating_multituples} and Condition (ii) of Definition  \ref{defn:separating_actions_rk} help in eliminating data sets that do not lead to roots. For the sake of brevity, we only list those data sets that do lead to roots. Furthermore, in each case, a careful examination of the data set $\D$ also gives $\o_{\D}$, and so we only display the former. 

Finally, when $\overline{t_i}$ is a permuting $(n,r,k)$-action on $\S_g(m)$, we use Remark \ref{rem: permuting_action_sgm} and replace $\overline{t_i}$ by the corresponding action on $S_g$ which has an $m^{th}$ root $\widetilde{t_i}$, whose equivalence class can be encoded by a data set $D_i$. Therefore an $(m+1)$-compatible multituple is described by a tuple $(D_1,D_2,\ldots, D_{m+1})$ of data sets, which will be listed in a table. While enumerating the curves in a multicurve, we use the convention that separating curves are denoted with the letter $c$, while nonseparating curves are denoted with the letter $d$. \\

\noindent \textbf{$\C$ is a nonseparating multicurve.}

\begin{table}[H]
\begin{tabular}{|l|c|}
\hline
Degree & $D_1$ \\
\hline
3 & $(3,0;(1,3),(1,3))$ \\
\hline
\end{tabular}
\caption{$\C = \{d_1,d_2,d_3\}$ and $(r,k)=(0,1)$}
\end{table}

\begin{table}[H]
\begin{tabular}{|l|c|}
\hline
Degree & $D_1$  \\
\hline
2 & $(2,1;)$ \\
\hline
\end{tabular}
\caption{$\C = \{d_1,d_2\}$ and $(r,k)=(0,1)$}
\end{table}

\noindent{\textbf{$\C$ is a separating multicurve.}}
\begin{table}[H]
\begin{tabular}{|l|c|c|}
\hline
Degree & $D_1$ & $D_2$ \\
\hline
3 & $(3,0;(1,3),(1,3))$ & $(1,1;)$ \\
\hline
\end{tabular}
\caption{$\C = \{c_1,c_2,c_3\}, S_3 = S_0\#_{\C} S_1(3)$ and $(r,k)=(0,1)$}
\end{table}

\begin{table}[H]
\begin{tabular}{|l|c|c|c|}
\hline
Degree & $D_1$ & $D_2$ & $D_3$ \\
\hline
2 & $(1,1,1;)$ & $(2,0,1;(1,2))$ & $(1,1;)$ \\
\hline
\end{tabular}
\caption{$\C=\{c_1,c_2,c_3\}, S_3=S_0 \#_{c_1}S_1\#_{\{c_2,c_3\}}S_1(2)$ and $(r,k)=(1,1)$}
\end{table}

\begin{table}[H]
\begin{tabular}{|l|c|c|c|c|}
\hline
Degree & $D_1$ & $D_2$ & $D_3$ \\
\hline
3 & $(3,0,2;(2,3),(2,3))$ & $(3,0,(2,2);(2,3))$ & $(3,0,2;(2,3),(2,3))$ \\
6 & $(6,0,5;(1,2),(2,3))$ & $(3,0,(1,1);(1,3))$ & $(6,0,5;(1,2),(2,3))$ \\
6 & $(6,0,1;(1,2),(1,3))$ & $(1,1,(1,1);)$ & $(6,0,1;(1,2),(1,3))$ \\
6 & $(3,0,2;(2,3),(2,3))$ & $(2,0,(1,1);(1,2),(1,2))$ & $(3,0,2;(2,3),(2,3))$ \\
6 & $(2,0,1;(1,2),(1,2),(1,2))$ & $(3,0,(2,2);(2,3))$ & $(2,0,1;(1,2),(1,2),(1,2))$ \\
12 & $(3,0,1;(1,3),(1,3))$ & $(4,0,(3,3);(1,2))$ & $(3,0,1;(1,3),(1,3))$ \\
12 & $(4,0,3;(1,2),(3,4))$ & $(3,0,(1,1);(1,3))$ & $(4,0,3;(1,2),(3,4))$ \\
\hline
\end{tabular}
\caption{$\C = \{c_1,c_2\}, S_3=S_1\#_{c_1}S_1\#_{c_2}S_1$ and $(r,k)=(2,0)$}
\end{table}

\begin{table}[H]
\begin{tabular}{|l|c|c|}
\hline
Degree & $D_1$ & $D_2$ \\
\hline
2 & $(2,1;)$ & $(1,1;)$ \\
12 & $(4,0;(1,4),(1,2),(1,4))$ & $(6,0;(5,6),(1,2),(2,3))$ \\
12 & $(4,0;(3,4),(1,2),(3,4))$ & $(6,0;(5,6),(1,2),(2,3))$ \\
12 & $(6,0;(5,6),(1,2),(2,3))$ & $(4,0;(1,4),(1,2),(1,4))$ \\
12 & $(6,0;(5,6),(1,2),(2,3))$ & $(4,0;(3,4),(1,2),(3,4))$ \\
\hline
\end{tabular}
\caption{$\C = \{c_1,c_2\}, S_3 = S_1\#_{\C}S_1(2)$ and $(r,k)=(0,1)$}
\end{table}

\begin{table}[H]
\begin{tabular}{|l|c|c|c|}
\hline
Degree & $D_1$ & $D_2$ \\
\hline
2 & $(2,0,(1,1);(1,2),(1,2),(1,2))$ & $(1,0,(1,1);)$ \\
3 & $(3,0,(1,1);(1,3))$ & $(1,0,(1,1);)$ \\
3 & $(3,0,(2,2);(2,3))$ & $(3,0,(2,2);(2,3))$ \\
4 & $(4,0,(1,1);(1,2))$ & $(1,0,(1,1);)$ \\
4 & $(4,0,(3,3);(1,2))$ & $(2,0,(1,1);(1,2),(1,2),(1,2))$ \\
6 & $(3,0,(2,2);(2,3))$ & $(2,0,(1,1);(1,2),(1,2),(1,2))$ \\
12 & $(4,0,(3,3);(1,2))$ & $(3,0,(1,1);(1,3))$ \\
\hline
\end{tabular}
\caption{$\C = \C^{(2)}$, $S_3 = S_1 \#_{\C} S_1$, and $(r,k)=(2,0)$}
\end{table}

\begin{table}[H]
\begin{tabular}{|l|c|c|}
\hline
Degree & $D_1$ & $D_2$ \\
\hline
6 & $(2,0;(1,2),(1,2),(1,2),(1,2))$ & $(6,0;(1,6),(1,2),(1,3))$ \\
\hline
\end{tabular}
\caption{$\C = \C^{(2)},S_3 = S_1\#_{\C} S_1$ and $(r,k)=(0,1)$}
\end{table}

\noindent \textbf{$\C$ is a mixed multicurve.}

\begin{table}[H]
\begin{tabular}{|l|c|c|}
\hline
Degree & $D_1$ & $D_2$ \\
\hline
2 & $(3,0;(1,3),(1,3))$ & $(1,1;)$ \\
\hline
\end{tabular}
\caption{$\C=\{c_1,c_2,c_3,d_1,d_2,d_3\}, S_3=S_0\#_{\{c_1,c_2,c_3\}}\# S_1(3)$ where $d_i\in \widehat{S_1^i}$ and $(r,k)=(0,2)$}
\end{table}

\begin{table}[H]
\begin{tabular}{|l|c|c|c|}
\hline
Degree & $D_1$ & $D_2$ & $D_3$ \\
\hline
2  & $(1,1,1;)$ & $(2,0,1;(1,2))$ & $(1,1;)$ \\
\hline
\end{tabular}
\caption{$\C = \{c_1,c_2,c_3,d_1,d_2\}$, $S_3=S_1\#_{c_1}S_0\#_{\{c_2,c_3\}}S_1(2)$ where $d_i \in \widehat{S_1^i}$ for $i=1,2$ and $(r,k)=(1,2)$}
\end{table}

\begin{table}[H]
\begin{tabular}{|l|c|c|}
\hline
Degree & $D_1$ & $D_2$  \\
\hline
2 & $(2,1;)$ & $(2,0;)$ \\
\hline
\end{tabular}
\caption{$\C=\{c_1,c_2,d_1,d_2\}, S_3=S_1\#_{\{c_1,c_2\}}S_1(2)$ and $d_i \in \widehat{S_1^i}$ for $i=1,2$ and $(r,k)=(0,2)$}
\end{table}

\begin{table}[H]
\begin{tabular}{|l|c|c|}
\hline
Degree & $D_1$ & $D_2$  \\
\hline
2 & $(2,0,1;(1,2),(1,2),(1,2))$ & $(1,1,1;)$ \\
\hline
\end{tabular}
\caption{$\C=\{c_1,d_1,d_2\}, S_3=S_1\#_{c_1}S_2$ and $\{d_1,d_2\} \subset \widehat{S_2}$ and $(r,k)=(1,1)$}
\end{table}

\begin{table}[H]
\begin{tabular}{|l|c|c|c|}
\hline
Degree & $D_1$ & $D_2$ \\
\hline
3 & $(3,0,2;(2,3),(2,3))$ & $(3,0,(2,2,2);)$ \\
\hline
\end{tabular}
\caption{$\C = \{c_1,d_1\}$ mixed and $S_3 = S_1\#_{c_1}S_2$ where $d_1 \in \widehat{S_2}$ and $(r,k)=(2,0)$}
\end{table}

\section{Classification of roots for $S_4$}
\label{sec:classify_genus4}

\noindent\textbf{$\C$ is a nonseparating multicurve.} 

\begin{table}[H]
\begin{tabular}{|l|c|}
\hline
Degree & $D_1$ \\
\hline
4 & $(4,0;(1,4),(1,4))$ \\
2 & $(2,1;(1,2),(1,2))$ \\
\hline
\end{tabular}
\caption{$\C= \{d_1,d_2,d_3,d_4\}$ and $(r,k)=(0,1)$}
\end{table}

\begin{table}[H]
\begin{tabular}{|l|c|}
\hline
Degree & $D_1$ \\
\hline
3 & $(3,1;)$ \\
\hline
\end{tabular}
\caption{$\C= \{d_1,d_2,d_3\}$ and $(r,k)=(0,1)$}
\end{table}

\begin{table}[H]
\begin{tabular}{|l|c|}
\hline
Degree & $D_1$ \\
\hline
2 & $(2,1;(1,2),(1,2))$ \\
\hline
\end{tabular}
\caption{$\C = \{d_1,d_2\}$ and $(r,k)=(0,1)$}
\end{table}

%------------------------------
%------------------------------

\noindent\textbf{$\C$ is a separating multicurve.} 

\begin{table}[H]
\begin{tabular}{|l|c|c|}
\hline
Degree & $D_1$ & $D_2$ \\
\hline
4 & $(4,0;(1,4),(1,4))$ & $(1,1;)$ \\
\hline
\end{tabular}
\caption{$\C = \{c_1,c_2,c_3,c_4\}, S_4 = S_0\#_{\C} S_1(4)$ and $(r,k)=(0,1)$}
\end{table}

\begin{table}[H]
\begin{tabular}{|l|c|c|c|}
\hline
Degree & $D_1$ & $D_2$ & $D_3$ \\
\hline
2 & $(1,1;)$ & $(2,0;)$ & $(1,1;)$ \\
6 & $(6,0;(1,6),(1,2),(1,3))$ & $(2,0;)$ & $(6,0;(1,6),(1,2),(1,3))$ \\
\hline
\end{tabular}
\caption{$\C = \{c_1,c_2,c_3,c_4\},S_4 =  S_1(2)\#_{\{c_1,c_2\}}S_0 \#_{\{c_3,c_4\}}S_1(2)$ and $(r,k)=(0,2)$}
\end{table}

\begin{table}[H]
\begin{tabular}{|l|c|c|}
\hline
Degree & $D_1$ & $D_2$ \\
\hline
3 & $(3,1;)$ & $(1,1;)$ \\
6 & $(6,0; (1,6),(1,2),(1,3))$ & $(1,1;)$\\
6 & $(6,0; (5,6),(1,2),(2,3))$ & $(1,1;)$ \\
6 & $(3,1;)$ & $(6,0; (1,6),(1,2),(1,3))$ \\
6 & $(3,1;)$ & $(6,0; (5,6),(1,2),(2,3))$ \\
\hline
\end{tabular}
\caption{$\C=\{c_1,c_2,c_3\}, S_4 = S_1\#_{\C} S_1(3)$ and $(r,k)=(0,1)$.}
\end{table}

\begin{table}[H]
\begin{tabular}{|l|c|c|c|}
\hline
Degree & $D_1$ & $D_2$ & $D_3$ \\
\hline
4 & $(1,1,1;)$ & $(4,0,1;(1,2),(1,4))$ & $(1,1;)$ \\
6 & $(1,1,1;)$ & $(6,0,1;(1,2),(1,3))$ & $(1,1;)$ \\
6 & $(3,0,2;(2,3),(2,3))$ & $(2,0,1;(1,2),(1,2),(1,2))$ & $(6,0;(1,6),(1,2),(1,3))$ \\
\hline
\end{tabular}
\caption{$\C = \{c_1,c_2,c_3\}, S_4 = S_1 \#_{c_1}S_1\#_{\{c_2,c_3\}}S_1(2)$ and $(r,k)=(1,1)$}
\end{table}

\begin{table}[H]
\begin{tabular}{|l|c|c|c|c|}
\hline
Degree & D1 & D2 & D3 &  D4 \\
\hline
3 & $(3,0,2;(2,3),(2,3))$ & $(3,0,2;(2,3),(2,3))$ & $(3,0,2;(2,3),(2,3))$ & $(3,0,2;(2,3),(2,3))$ \\
6 & $(6,0,5;(1,2),(2,3))$ & $(3,0,1;(1,3),(1,3))$ & $(6,0,5;(1,2),(2,3))$ & $(3,0,1;(1,3),(1,3))$ \\
12 & $(3,0,1;(1,3),(1,3))$ & $(4,0,3;(1,2),(3,4))$ & $(3,0,1;(1,3),(1,3))$ & $(4,0,3;(1,2),(3,4))$ \\
12 & $(4,0,3;(1,2),(3,4))$ & $(3,0,1;(1,3),(1,3))$ & $(4,0,3;(1,2),(3,4))$ & $(3,0,1;(1,3),(1,3))$ \\
\hline
\end{tabular}
\caption{$\C = \{c_1,c_2,c_3\}, S_4 = S_1\#_{c_1}S_1\#_{c_2}S_1\#_{c_3}S_1$ and $(r,k)=(3,0)$} 
\end{table}

\begin{table}[H]
\begin{tabular}{|l|c|c|}
\hline
Degree & $D_1$ & $D_2$ \\
\hline
6 & $(2,0;(1,2),(1,2),(1,2),(1,2),(1,2),(1,2))$ & $(6,0;(1,6),(1,2),(1,3))$ \\
6 & $(2,1;(1,2),(1,2))$ & $(6,0;(1,6),(1,2),(1,3))$ \\
6 & $(6,0;(5,6),(1,3),(5,6))$ & $(1,1;)$ \\
\hline
\end{tabular}
\caption{$\C= \{c_1,c_2\}, S_4 = S_2 \#_{\C}S_1(2)$ and $(r,k)=(0,1)$.}
\end{table}

\begin{table}[H]
\begin{tabular}{|l|c|c|c|}
\hline
Degree & $D_1$ & $D_2$ & $D_3$ \\
\hline
6 & $(6,0,5;(1,2),(2,3))$ & $(3,0,1;(1,3),(1,3))$ & $(6,0,5;(1,3),(5,6))$ \\
\hline
\end{tabular}
\caption{$\C = \{c_1,c_2\}, S_4 = S_1\#_{c_1}S_1\#_{c_2}S_2$ and $(r,k)=(2,0)$.}
\end{table}

\begin{table}[H]
\begin{tabular}{|l|c|c|}
\hline
Degree & $D_1$ & $D_2$\\
\hline
3 & $(3,0;(1,3),(1,3),(1,3))$ & $(3,0;(1,3),(1,3),(1,3))$ \\
3 & $(3,0;(1,3),(1,3),(1,3))$ & $(3,0;(2,3),(2,3),(2,3))$ \\
3 & $(3,0;(2,3),(2,3),(2,3))$ & $(3,0;(2,3),(2,3),(2,3))$ \\
3 & $(3,0;(2,3),(2,3),(2,3))$ & $(3,0;(1,3),(1,3),(1,3))$ \\
6 & $(3,0;(1,3),(1,3),(1,3))$ & $(6,0;(1,6),(1,2),(1,3))$ \\
6 & $(3,0;(1,3),(1,3),(1,3))$ & $(6,0;(5,6),(1,2),(2,3))$ \\
6 & $(3,0;(2,3),(2,3),(2,3))$ & $(6,0;(1,6),(1,2),(1,3))$ \\
6 & $(3,0;(2,3),(2,3),(2,3))$ & $(6,0;(5,6),(1,2),(2,3))$ \\
\hline
\end{tabular}
\caption{$\C=\C^{(3)}, S_4= S_1\#_{\C}S_1$ and $(r,k)=(0,1)$}
\end{table}

\begin{table}[H]
\begin{tabular}{|l|c|c|}
\hline
Degree & $D_1$ & $D_2$\\
\hline
4 & $(2,0,1;(1,2),(1,2),(1,2))$ & $(4,0,3;(1,2),(3,4))$ \\
\hline
\end{tabular}
\caption{$\C=\C^{(3)}, S_4 = S_1\#_{\C}S_1$ and $(r,k)=(1,1)$}
\end{table}
\begin{table}[H]
\begin{tabular}{|l|c|c|}
\hline
Degree & $D_1$ & $D_2$ \\
\hline
3 & $(3,0,(2,2,2);)$ & $(3,0,(2,2,2);)$ \\
6 & $(3,0,(2,2,2);)$ & $(2,0,(1,1,1);(1,2))$ \\
\hline
\end{tabular}
\caption{$\C = \C^{(3)}, S_4 = S_1\#_{\C} S_1$ and $(r,k)=(3,0)$.}
\end{table}

\begin{table}[H]
\begin{tabular}{|l|c|c|c|}
\hline
Degree & $D_1$ & $D_2$ & $D_3$ \\
\hline
4 & $(4,0;(1,4),(1,2),(1,4))$ & $(2,0;(1,2),(1,2))$ & $(4,0;(1,4),(1,2),(1,4))$ \\
4 & $(4,0;(1,4),(1,2),(1,4))$ & $(2,0;(1,2),(1,2))$ & $(4,0;(3,4),(1,2),(3,4))$ \\
4 & $(4,0;(3,4),(1,2),(3,4))$ & $(2,0;(1,2),(1,2))$ & $(4,0;(3,4),(1,2),(3,4))$ \\
6 & $(6,0;(1,6),(1,2),(1,3))$ & $(2,0;(1,2),(1,2))$ & $(6,0;(1,6),(1,2),(1,3))$ \\
6 & $(6,0;(1,6),(1,2),(1,3))$ & $(2,0;(1,2),(1,2))$ & $(6,0;(5,6),(1,2),(2,3))$ \\ 
6 & $(6,0;(5,6),(1,2),(2,3))$ & $(2,0;(1,2),(1,2))$ & $(6,0;(5,6),(1,2),(2,3))$ \\
\hline
\end{tabular}
\caption{$\C = \C^{(2)}\sqcup \C^{(2)}, S_4 = S_1\#_{\C^{(2)}} S_0 \#_{\C^{(2)}} S_1$ and $(r,k)=(0,2)$.}
\end{table}

\begin{table}[H]
\begin{tabular}{|l|c|c|}
\hline
Degree & $D_1$ & $D_2$ \\
\hline
2 & $(2,0;(1,2),(1,2))$ & $(1,1;)$ \\
6 & $(2,0;(1,2),(1,2))$ & $(6,0;(1,6),(1,2),(1,3))$ \\
\hline
\end{tabular}
\caption{$\C = \C^{(2)}\sqcup \C^{(2)}, S_4 = S_0\#_{\C} S_1(2)$ and $(r,k)=(0,2)$.}
\end{table}

\begin{table}[H]
\begin{tabular}{|l|c|c|}
\hline
Degree & $D_1$ & $D_2$ \\
\hline
4 & $(2,0;(1,2),(1,2),(1,2),(1,2),(1,2),(1,2))$ & $(4,0;(1,4),(1,2),(1,4))$ \\
4 & $(2,0;(1,2),(1,2),(1,2),(1,2),(1,2),(1,2))$ & $(4,0;(3,4),(1,2),(3,4))$ \\
4 & $(2,1;(1,2),(1,2))$ & $(4,0;(1,4),(1,2),(1,4))$ \\
4 & $(2,1;(1,2),(1,2))$ & $(4,0;(3,4),(1,2),(3,4))$ \\
4 & $(4,0;(1,2),(1,2),(3,4))$ &  $(2,0;(1,2),(1,2),(1,2),(1,2))$ \\
4 & $(4,0;(1,2),(1,2),(3,4))$ & $(2,1;)$ \\
4 & $(4,0;(1,2),(1,2),(2,4))$ & $(2,0;(1,2),(1,2),(1,2),(1,2))$ \\
4 & $(4,0;(1,2),(1,2),(2,4))$ & $(2,1;)$ \\
6 & $(2,0;(1,2),(1,2),(1,2),(1,2),(1,2),(1,2))$ & $(6,0;(1,6),(1,2),(1,3))$ \\
6 & $(2,1;(1,2),(1,2))$ & $(6,0;(1,6),(1,2),(1,3))$ \\
6 & $(6,0;(5,6),(1,3),(5,6))$ & $(2,0;(1,2),(1,2),(1,2),(1,2))$ \\
6 & $(6,0;(5,6),(1,3),(5,6))$ & $(2,1;)$ \\
\hline
\end{tabular}
\caption{$\C = \C^{(2)}, S_4 = S_2\#_{\C} S_1$ and $(r,k)=(0,1)$.}
\end{table}

\begin{table} [H]
\begin{tabular}{|l|l|l|}
\hline
Degree & $D_1$ & $D_2$ \\
\hline
2 & $(1,2,(1,1);)$ & $(2,0,(1,1);(1,2),(1,2))$ \\
2 & $(2,0,(1,1);(1,2),(1,2),(1,2),(1,2))$ & $(1,1,(1,1);)$ \\
2 & $(2,1,(1,1);)$ & $(1,1,(1,1);)$ \\
3 & $(1,2,(1,1);)$ & $(3,0,(1,1);(1,3))$ \\
3 & $(3,0,(1,1);(2,3),(2,3))$ & $(1,1,(1,1);)$ \\
3 & $(3,0,(2,2);(1,3),(1,3))$ & $(3,0,(2,2);(1,3),(1,3))$ \\
4 & $(1,2,(1,1);)$ & $(4,0,(1,1);(1,2))$ \\
5 & $(5,0,(1,1);(3,5))$ & $(1,1,(1,1);)$ \\
6 & $(6,0,(1,1);(2,3))$ & $(1,1,(1,1);)$ \\
6 & $(2,0,(1,1);(1,2),(1,2),(1,2),(1,2))$ & $(3,0,(2,2);(2,3))$ \\
6 & $(2,0,(1,1);(1,2),(1,2),(1,2),(1,2))$ & $(3,0,(2,2);(2,3))$ \\
6 & $(2,1,(1,1);)$ & $(3,0,(2,2);(2,3))$ \\
6 & $(3,0,(2,2);(1,3),(1,3))$ & $(2,0,(1,1);(1,2),(1,2))$ \\
6 & $(6,0,(5,5);(1,3)$ & $(3,0,(1,1);(1,3))$ \\
12 & $(3,0,(1,1);(2,3),(2,3))$ & $(4,0,(3,3);(1,2))$ \\
12 & $(6,0,(5,5);(1,3))$ & $(4,0,(1,1);(1,2))$ \\
15 & $(5,0,(3,3);(4,5))$ & $(3,0,(2,2);(2,3))$ \\
20 & $(5,0,(4,4);(2,5))$ & $(4,0,(1,1);(1,2))$ \\
\hline
\end{tabular}
\caption{$\C = \C^{(2)}, S_4 = S_2 \#_{\C} S_1$ and $(r,k)=(2,0)$} 
\end{table}

\begin{table}[H]
\begin{tabular}{|l|c|c|c|}
\hline
Degree & $D_1$ & $D_2$ & $D_3$\\
\hline
6 & $(3,0,2;(2,3),(2,3))$ & $(2,0,1;(1,2),(1,2),(1,2))$ & $(6,0;(1,6),(1,2),(1,3))$ \\
6 & $(1,1,1;)$ & $(6,0,1;(1,2),(1,3))$ & $(2,0;(1,2),(1,2),(1,2),(1,2))$ \\
\hline
\end{tabular}
\caption{$\C = \C^{(2)}\sqcup\{c_1\}, S_4 = S_1\#_{c_1} S_1\#_{\C^{(2)}} S_1$ and $(r,k)=(1,1)$.}
\end{table}
\begin{table}[H]
\begin{tabular}{|l|c|c|c|c|}
\hline
Degree & $D_1$ & $D_2$ & $D_3$ \\
\hline
3 & $(3,0,2;(2,3),(2,3))$ & $(3,0,(2,2,2);)$ & $(3,0,(2,2);(2,3))$ \\
12 & $(4,0,3;(1,2),(3,4))$ & $(3,0,(1,1,1);)$ & $(4,0,(3,3);(1,2))$ \\
\hline
\end{tabular}
\caption{$\C = \{c_1\}\sqcup \C^{(2)}, S_4  = S_1 \#_{c_1} S_1 \#_{\C^{(2)}} S_1$ and $(r,k)=(3,0)$.}
\end{table}

%------------------------------

\noindent\textbf{$\C$ is a mixed multicurve.} 

\begin{table}[H]
\begin{tabular}{|l|c|c|}
\hline
Degree & $D_1$ & $D_2$ \\
\hline
4 & $(4,0;(1,4),(1,4)$ & $(1,0;)$ \\
\hline
\end{tabular}
\caption{$\C = \{c_1,\ldots, c_4, d_1,\ldots, d_4\}$, $S_4 = S_0\#_{\{c_1,\ldots, c_4\}} S_1(4)$ with $d_i \in \widehat{S_1^i}$ and $(r,k)=(0,2)$.}
\end{table}

\begin{table}[H]
\begin{tabular}{|l|c|c|c|}
\hline
Degree & $D_1$ & $D_2$ & $D_3$ \\
\hline
2 & $(1,0;)$ & $(2,0;(1,2),(1,2))$ & $(1,0;)$ \\
\hline
\end{tabular}
\caption{$\C = \{c_1,\ldots, c_4, d_1,\ldots, d_4\}$, $S_4 = S_1(2)\#_{\{c_1,c_2\}}S_0\#_{\{c_3,c_4\}}S_1(2)$ with $d_i \in \widehat{S_1^i}$ and $(r,k)=(0,4)$.}
\end{table}

\begin{table}[H]
\begin{tabular}{|l|c|c|c|}
\hline
Degree & $D_1$ & $D_2$ & $D_3$ \\
\hline
2 & $(1,1;)$ & $(2,0;(1,2),(1,2))$ & $(1,0;)$ \\
\hline
\end{tabular}
\caption{$\C=\{c_1,\ldots, c_4,d_1,d_2\}, S_4 = S_1(2)\#_{\{c_1,c_2\}}S_0\#_{\{c_3,c_4\}}S_1(2)$ with $d_i \in \widehat{S_1^i}$ and $(r,k)=(0,3)$}
\end{table}

\begin{table}[H]
\begin{tabular}{|l|c|c|c|c|}
\hline
Degree & $D_1$ & $D_2$ & $D_3$ & $D_4$ \\
\hline
2 &  $(1,0;)$ & $(1,1,1;)$ & $(2,0,1;(1,2))$ & $(1,1,1;)$  \\
\hline
\end{tabular}
\caption{$\C=\{c_1,\ldots, c_4, d_1,d_2\}, S_4 = S_1(2)\#_{\{c_1,c_2\}}\left(S_1\#_{c_3}S_0\#_{c_4}S_1\right)$ with $d_i \in \widehat{S_1^i}$ and $(r,k)=(2,2)$}
\end{table}

\begin{table}[H]
\begin{tabular}{|l|c|c|}
\hline
Degree & $D_1$ & $D_2$\\
\hline
3 & $(3,1;)$ & $(1,0;)$ \\
\hline
\end{tabular}
\caption{$\C=\{c_1,c_2,c_3,d_1,d_2,d_3\}, S_4 = S_1\#_{\{c_1,c_2,c_3\}}S_1(3)$ with $d_i \in \widehat{S_1^i}$ and $(r,k)=(0,2)$}
\end{table}

\begin{table}[H]
\begin{tabular}{|l|c|c|c|}
\hline
Degree & $D_1$ & $D_2$ & $D_3$ \\
\hline
2 & $(2,0,1;(1,2),(1,2),(1,2))$ & $(1,1,1;)$ & $(4,0;(1,4),(1,4))$ \\
\hline
\end{tabular}
\caption{$\C=\{c_1,c_2,c_3,d_1,d_2\}, S_4=S_1\#_{c_1}S_1\#_{\{c_2,c_3\}}S_1(2)$ with $d_i \in \widehat{S_1^i}$ and $(r,k)=(1,2)$}
\end{table}

\begin{table}[H]
\begin{tabular}{|l|c|c|}
\hline
Degree & $D_1$ & $D_2$\\
\hline
2 & $(2,0;(1,2),(1,2),(1,2),(1,2),(1,2),(1,2))$ & $(4,0;(1,4),(1,4))$ \\
2 & $(2,1;(1,2),(1,2))$ & $(4,0;(1,4),(1,4))$ \\
\hline
\end{tabular}
\caption{$\C=\{c_1,c_2,d_1,d_2\}, S_4 = S_2\#_{\{c_1,c_2\}}S_1(2)$ with $d_i \in \widehat{S_1^i}$ and $(r,k)=(0,2)$}
\end{table}

\begin{table}[H]
\begin{tabular}{|l|c|c|}
\hline
Degree & $D_1$ & $D_2$\\
\hline
2 & $(2,0;(1,2),(1,2))$ & $(4,0;(1,4),(1,2),(1,4))$ \\
2 & $(2,0;(1,2),(1,2))$ & $(4,0;(3,4),(1,2),(3,4))$ \\
\hline
\end{tabular}
\caption{$\C=\{c_1,c_2,d_1,d_2\}, S_4=S_2\#_{\{c_1,c_2\}}S_1(2)$ with $d_i \in \widehat{S_2}$ and $(r,k)=(0,2)$}
\end{table}

\begin{table}[H]
\begin{tabular}{|l|c|c|}
\hline
Degree & $D_1$ & $D_2$\\
\hline
2 & $(1,1,1;)$ & $(2,0,1;(1,2),(1,2),(1,2))$ \\
\hline
\end{tabular}
\caption{$\C=\{c_1,d_1,d_2\}, S_4 = S_1\#_{c_1}S_3$ with $\{d_1,d_2\}\subset \widehat{S_3}$ and $(r,k)=(1,1)$}
\end{table}

\begin{table}[H]
\begin{tabular}{|l|c|c|}
\hline
Degree & $D_1$ & $D_2$\\
\hline
2 & $(2,0;(1,2),(1,2))$ & $(2,0;(1,2),(1,2),(1,2),(1,2))$ \\
\hline
\end{tabular}
\caption{$\C = \C^{(2)}\sqcup\{d_1,d_2\},S_4 = S_2\#_{\C^{(2)}}S_1$ with $\{d_1,d_2\} \subset \widehat{S_2}$ and $(r,k)=(0,2)$.}
\end{table}

\section{Concluding remarks}

\subsection{Roots and the Torelli Group}
Let  $\Psi : \Mod(S_g) \to \Sp(2g,\mathbb{Z})$ be the symplectic representation of $\Mod(S_g)$ arising out of its action on $H_1(S_g,\mathbb{Z})$, and let $\I(S_g)$ denote the kernel of $\Psi$, the Torelli group of $S_g$. If $\C$ contains nonseparating curves, then $\Psi(t_{\C})$ is a product of commuting elementary matrices, and thus a root of $t_{\C}$ induces a root of such a matrix in $Sp(2g,\mathbb{Z})$. However, if every curve in $\C$ is separating, then $\Psi(h)$ is a root of unity in $Sp(2g,\mathbb{Z})$. Using the theory we have developed for multicurves, we shall now give a succinct proof of the fact that such a root cannot lie in $\I(S_g)$.

\begin{thm}\label{thm:no_roots_Tor}
Let $h$ be the root of the Dehn twist $t_{\C}$ about a multicurve $\C$ in $S_g$. Then $h \notin \I(S_g)$. 
\end{thm}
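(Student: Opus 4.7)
The plan is to argue by cases on whether $\C$ contains a nonseparating curve. If some $c_i \in \C$ is nonseparating, I would show $\Psi(t_{\C}) \neq I$ directly, so that any root $\Psi(h)$ of it is nontrivial and hence $h \notin \ker \Psi = \I(S_g)$. Writing $\alpha_i := [c_i] \in H_1(S_g,\mathbb{Z})$, the disjointness of the curves in $\C$ gives $\Psi(t_{\C})(x) = x + \sum_i \langle x, \alpha_i\rangle \alpha_i$; I would then verify nontriviality by considering the bilinear form $B(x, y) := \sum_i \langle x, \alpha_i\rangle\langle y, \alpha_i\rangle$, since $B(x,x) = \sum_i \langle x, \alpha_i\rangle^2$ is a sum of squares and some $\alpha_i \neq 0$, so nondegeneracy of the intersection pairing produces an $x$ with $B(x,x) > 0$.

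If every curve in $\C$ is separating, $\Psi(t_{\C}) = I$ and the structural theory is needed. Assuming for contradiction that $h \in \I(S_g)$ is a nontrivial ($n \geq 2$) root, Theorem~\ref{thm: compatible_multituple} encodes the conjugacy class of $h$ as an $(m+1)$-compatible multituple of finite-order actions on the components of $S_g(\C)$. The dual graph of $S_g \setminus \C$ is a tree (separating cuts cannot produce cycles), and essentiality together with pairwise nonisotopy of the curves in $\C$ forbids disk and annulus pieces, so every leaf corresponds to a component whose capped closed surface $\widehat{\Sigma_i}$ has genus $\geq 1$. Mayer--Vietoris yields $H_1(S_g) = \bigoplus_i H_1(\widehat{\Sigma_i})$; since $\Psi(h) = I$ cannot nontrivially permute positive-dimensional summands, $h$ fixes every leaf, and therefore (the tree being the convex hull of its leaves) every piece of $S_g(\C)$ setwise.

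It now suffices to show the induced finite-order mapping class $\tilde{h}_i$ on each $\widehat{\Sigma_i}$ is trivial. On positive-genus pieces this follows from Serre's lemma (the kernel of $\Mod(\widehat{\Sigma_i}) \to \Sp(2g_i,\mathbb{Z}/3)$ is torsion-free), since $\tilde{h}_i$ is torsion and acts trivially on $H_1(\widehat{\Sigma_i})$. On a genus-zero piece, $\widehat{\Sigma_i} \cong S^2$ with $k_i \geq 3$ distinguished points (the centres of the disks capping off the adjacent curves of $\C$), and $\tilde{h}_i$ is a finite-order orientation-preserving homeomorphism of $S^2$ fixing at least three points; since any nontrivial such map is conjugate to a rotation with exactly two fixed points, $\tilde{h}_i = \mathrm{id}$ here as well. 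Hence $h$ is supported in an annular neighbourhood of $\C$, Lemma~\ref{lem:powers_of_twists} gives $h = \prod_{c \in \C} t_c^{a_c}$, and $h^n = t_{\C}$ forces $n a_c = 1$ for every $c$, contradicting $n \geq 2$. The most delicate step I anticipate is the dual-graph reduction in the separating case: one must ensure that the Mayer--Vietoris decomposition and the extraction of $\tilde{h}_i$ as an honest finite-order element of $\Mod(\widehat{\Sigma_i})$ are handled uniformly across all nested and branching configurations of a separating multicurve.
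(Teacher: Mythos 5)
Your proposal is correct, but it reaches the conclusion by a genuinely different route than the paper, so a comparison is worthwhile. In the nonseparating case you verify directly that $\Psi(t_{\C})\neq I$ via the form $B(x,x)=\sum_i\langle x,\alpha_i\rangle^2$ (one extra line — pairing the displacement $\sum_i\langle x,\alpha_i\rangle\alpha_i$ against $x$ gives $\pm B(x,x)\neq 0$ — closes this), whereas the paper simply reduces to the all-separating case by quoting that $t_c\in\I(S_g)$ if and only if $c$ separates. In the separating case the two arguments diverge: the paper works positively, extracting from the $(m+1)$-compatible multituple of Theorem~\ref{thm: compatible_multituple} a piece of data that is visibly non-Torelli — either a nontrivial finite-order action on a positive-genus component, ruled out of $\I$ by torsion-freeness of the Torelli group, or a genuine cyclic permutation of the homology summands coming from $\S_{g_{i,2,j}}(m_{i,j})$ — while you argue by contradiction: the tree structure of the dual graph plus Mayer--Vietoris forces $h$ to preserve every component (and hence, since trees have no multi-edges, every curve of $\C$), Serre's lemma trivializes the induced finite-order classes on genus $\geq 1$ pieces and the rotation argument handles sphere pieces with $\geq 3$ fixed marked points, and then Lemma~\ref{lem:powers_of_twists} gives $h=\prod_c t_c^{a_c}$, incompatible with $h^n=t_{\C}$ for $n\geq 2$. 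Your route is more elementary and self-contained, treats the genus-zero central pieces explicitly, and makes visible the hypothesis $n\geq 2$, which is genuinely needed (for $n=1$ with $\C$ all-separating, $h=t_{\C}$ does lie in $\I(S_g)$) and is only implicit in the paper's step of choosing a nontrivial component of the multituple; the paper's proof is in turn shorter because it leans on the classification machinery already in place and never needs the supported-in-$N$ endgame. The steps you gloss — that an automorphism of a finite tree fixing all leaves fixes all vertices and edges, and that triviality of each capped mapping class lets you isotope $h$, up to boundary twists absorbed into the annular neighbourhood, into the hypotheses of Lemma~\ref{lem:powers_of_twists} — are routine and consistent with how the paper itself uses that lemma.
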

\begin{proof}
Since $\I(S_g)$ is normal in $\Mod(S_g)$, it suffices to prove that the conjugacy class of $h$ intersects $\I(S_g)$ non-trivially. If $c$ is an essential simple closed curve in $S_g$, it follows from \cite[Proposition 6.3, Example 6.5.2]{MF} that $t_c \in \I(S_g)$ if and only if $c$ is a separating curve. Consequently, it suffices to assume that every curve in $\C$ is a separating curve. 

In that case, let $\Fix_h(\C) = \{\E_1,\ldots,\E_m\}$, where $m = |\Fix_h(\C)|$, and write
$$
S_g = \csum_{i=1}^{m(h)}  (S_{g_i} \#_{\E_i} S_{g_{i+1}}),
$$
as in Notation~\ref{not:rk_permuting_root}. By Theorem~\ref{thm: compatible_multituple}, the conjugacy class of $h$ corresponds to an $(m+1)$-compatible multituple $( \overline{t_1} , \overline{t_2}, \ldots, \overline{t_{m+1}} )$, where 
$\bar{t_i} = (\l t_{i,1}\r ,\l t_{i,2,1}\r ,\ldots,\l t_{i,2,s_i}\r )$ is a $(s_i+1)$-compatible tuple of genus $g_i$.

Choose $1\leq i\leq m$ such that at least one component of $\overline{t_i}$ is non-trivial, so we write
$$
S_{g_i} = \csum_{j=1}^{k_i} (S_{g_{i,1}} \#_{D_{i,j}} \S_{g_{i,2,j}}(m_{1,j})),
$$
as in Notation \ref{not:rk_permuting_root}. If $g_{i,1} \geq 1$, and $t_{i,1}$ is a non-trivial finite order element in $\Mod(S_{g_{i,1}})$, then $t_{i,1} \notin \I(S_{g_{i,1}})$ by \cite[Theorem 6.8]{MF}. Since $t_{i,1}$ is obtained from a restriction of $h$ by coning, it follows that $h\notin \I(S_g)$. If $t_{i,1}$ is trivial or $g_{i,1}=0$, then it follows $k_i \geq 1$ and that $t_{i,2,j}$ is non-trivial for some $1\leq j\leq k_i$. Since each curve in $\C$ is separating, $g_{i,2,j} \geq 1$, and so the surface $\S_{g_{i,2,j}}(m_{i,j})$ contributes $2m_{i,j}g_{i,2,j}$ generators to the standard geometric symplectic basis of $H_1(S_g,\mathbb{Z})$. Now $t_{i,2,j}$ (and consequently $h$) cyclically permutes the components of $\S_{g_{i,2,j}}(m_{i,j})$, and thus $h\notin \I(S_g)$.
%If $S_{g_i} \cap \C = \emptyset$ for all $i$, then there exists 
%$s \in \{1,\ldots,m\}$ such that $t_{1,s}$ is a nontrival permuting $(n_{1,s},0,k_{2,s})$-action on $S_{g_{1,s}}$. Then it follows from \cite[Theorem 6.8]{MF} that $t_{1,s} \notin \I(S_{g_1})$, and hence $h \notin \I(S_g)$. If $\C \cap S_{g_{b}} \neq \emptyset$ for some $b \in \{1,\ldots,m\}$, then $S_{g_b}(D_b)$(where $D_b = \C \cap S_{g_b}$ contains a surface $\S_{g_{2,b,j}}(m_{b,j})$ for some some $j$ and some $m_{b,j}>1$. Moreover, the $m_{b,j}$ components of $\S_{g_{2,b,j}}(m_{b,j})$ are cyclically permuted by $t_{2,b,j}$. Since these 
%components contribute $2m_{b,j}g_{2,b,j}$ generators to the standard generating set of $H_1(S_g,\mathbb{Z})$ which are permuted by $t_{2,b,j}$, we have that $h \notin \I(S_g)$. 

\end{proof}

\subsection{Roots of finite product of powers}
The theory developed in this paper for classifying roots up to conjugacy for finite products of commuting Dehn twists can be naturally generalised to one that classifies roots of finite products of powers of commuting twists. 

Currently, the compatibility condition requires that pairs of distinguished orbits (or fixed points) of permuting actions should have associated angles that add up to $2\pi/n \pmod{2\pi}$. When $c$ is a single nonseparating curve, the roots of $t_c^{\ell}$ for $1 \leq \ell < n$, were classified in \cite{KR2} by using a variant of this condition, which required that the angles associated with compatible fixed points add up  to $2\pi\ell/n \pmod{2\pi}$. This notion of compatibility of fixed points can be generalized to orbits, and this will eventually lead to the classification of roots of homeomorphisms of the form $\prod_{i=1}^m t_{c_i}^{\ell_i}$, where $\{c_1,c_2,\ldots,c_m\}$ is a multicurve. It is also apparent that such roots would not lie in $\I(S_g)$ for the same reasons as above.
%----------------------------------------------------

\section{Acknowledgements}

The authors would like to thank Dan Margalit and Darryl McCullough for helpful suggestions and examples that made the theory more comprehensive.

\bibliographystyle{plain}
\bibliography{ultimate_version}

\end{document}